\theoremstyle{plain}
\newtheorem{theorem}{Theorem}[section]
\newtheorem{corollary}[theorem]{Corollary}
\newtheorem{proposition}[theorem]{Proposition}
\theoremstyle{definition}
\newtheorem{example}[theorem]{Example}
\newcommand{\QQ}{{\mathbb Q}}
\newcommand{\RR}{{\mathbb R}}
\newcommand{\ZZ}{{\mathbb Z}}
\newcommand{\calS}{{\mathcal{S}}}
\newcommand{\calD}{{\mathcal{D}}}
\newcommand{\calA}{{\mathcal{A}}}
\newcommand{\calK}{{\mathcal{K}}}
\newcommand{\calG}{{\mathcal{G}}}
\newcommand{\calM}{{\mathcal{M}}}
\newcommand{\calF}{{\mathcal{F}}}
\newcommand{\calH}{{\mathcal{H}}}
\newcommand{\calC}{{\mathcal{C}}}
\newcommand{\calQ}{{\mathcal{Q}}}
\newcommand{\calW}{{\mathcal{W}}}
\newcommand{\Ho}{{\rm Ho}}
\newcommand{\map}{{\rm map}}
\newcommand{\Map}{{\rm Map}}
\newcommand{\calMo}{\calM^{\circ}}
\newcommand{\sSet}{{\rm sSet}}
\newcommand{\Spectra}{{\rm Sp}}
\numberwithin{equation}{section}
\begin{document}

\title{Homotopy reflectivity is equivalent to the weak Vop\v{e}nka principle}
\author{Carles Casacuberta and Javier J. Guti\'errez}
\date{}
%\date{\today}
\maketitle

\begin{abstract}
Homotopical localizations with respect to (possibly proper) classes of maps are known to exist assuming the validity of a large-cardinal axiom from set theory called Vop\v{e}nka's principle. In this article, we prove that each of the following statements is equivalent to an axiom of lower consistency strength than Vop\v{e}nka's principle, known as weak Vop\v{e}nka's principle: (a)~Localization with respect to any class of maps exists in the homotopy category of simplicial sets; (b)~Localization with respect to any class of maps exists in the homotopy category of spectra; (c)~Localization with respect to any class of morphisms exists in any presentable $\infty$-category; (d)~Every full subcategory closed under products and fibres in a triangulated category with locally presentable models is reflective. Our results are established using Wilson's 2020 solution to a long-standing open problem concerning the relative consistency of weak Vop\v{e}nka's principle within the large-cardinal hierarchy.
\end{abstract}

\section*{Introduction}
\label{introduction}

One way to state the Vop\v{e}nka principle (VP) is that the class $\rm Ord$ of all the ordinals cannot be fully embedded into any locally presentable category.
In other words, there is no sequence of objects $\langle X_i \mid i\in{\rm Ord}\rangle$ indexed by all the ordinals in a locally presentable category $\mathcal C$ such that ${\mathcal C}(X_i,X_j)$ is a singleton if $i\le j$ and the empty set if $i>j$. This statement is a large-cardinal axiom that cannot be proved in~ZFC. As explained in \cite{AR}, it has many important consequences in category theory.

For example, the Vop\v{e}nka principle implies that full subcategories of locally presentable categories are reflective if they are closed under limits, and they are coreflective if they are closed under colimits. However, as shown in~\cite[\S\,6.D]{AR}, the reflectivity of limit-closed full subcategories is equivalent to the statement that there is no full embedding of ${\rm Ord}^{\rm op}$ into any locally presentable category, which is a weaker condition than Vop\v{e}nka's principle. This is called \emph{weak Vop\v{e}nka principle} (WVP). It can be rephrased by saying that there is no sequence of objects $\langle X_i \mid i\in{\rm Ord}\rangle$ indexed by all the ordinals in a locally presentable category $\mathcal C$ such that ${\mathcal C}(X_i,X_j)$ is a singleton if $i\ge j$ and the empty set if $i<j$. 

In \cite[Definition~I.11]{AR1}, Ad\'amek and Rosick\'y introduced another variation, which they named \emph{semi-weak Vop\v{e}nka principle} (SWVP), by replacing the condition that ${\mathcal C}(X_i,X_j)$ be a singleton if $i\ge j$ by the condition that ${\mathcal C}(X_i,X_j)$ be nonempty if $i\ge j$. As shown in~\cite{AR1}, SWVP is equivalent to the statement that every injectivity class in a locally presentable category is weakly reflective.

While it is easy to see that there are implications ${\rm VP}\Rightarrow{\rm SWVP}\Rightarrow {\rm WVP}$, whether the reverse implications are true or not remained as an open problem until Wilson proved in \cite{Wilson1} that WVP is equivalent to SWVP but not to VP. In fact the weak Vop\v{e}nka principle turned out to be equivalent to the statement that $\rm Ord$ is Woodin ---or, equivalently, that for every class $C$ there exists a $C$-strong cardinal; see~\cite{Wilson2}. This determines the precise place of the weak Vop\v{e}nka principle in the large-cardinal hierarchy, and shows that it is very far below Vop\v{e}nka's principle, even far below the existence of supercompact cardinals.

In another direction, it was shown in \cite{CSS} that the statement that all homotopical localizations of simplicial sets are $f$-localizations for some map $f$ is implied by Vop\v{e}nka's principle, and Prze\'zdziecki proved in \cite{Prz1} that the converse is true, that is, the statement that every homotopical localization of simplicial sets is an $f$\nobreakdash-localization for a single map $f$ is equivalent to Vop\v{e}nka's principle. Prze\'zdziecki also proved in \cite{Prz1} that the weak Vop\v{e}nka principle is equivalent to the statement that every orthogonality class in the category of groups is reflective, and he later proved in \cite{Prz2} that the same claim is true for the category of abelian groups.

In this article, we first prove that SWVP implies that localization with respect to any class of maps exists in the homotopy category of (pointed or unpointed) simplicial sets, and next we infer using \cite{Prz1} that the latter claim implies~WVP. Therefore, since SWVP and WVP are known to be equivalent by~\cite{Wilson1}, the claim that every (simplicially enriched) orthogonality class in simplicial sets is associated with a homotopical localization turns out to be equivalent to~WVP. We also prove that the same result holds for spectra. In fact, we show that WVP is equivalent to the claim that every full subcategory closed under products and fibres in any triangulated category with locally presentable models is reflective.

\newpage

As a consequence of our results, the existence of \emph{cohomological} localizations of simplicial sets or spectra is implied by the weak Vop\v{e}nka principle. This improves substantially a conclusion from~\cite{BCMR}, where a similar result was obtained assuming the existence of a proper class of supercompact cardinals.
Furthermore, using results from Bagaria and Wilson~\cite{BW}, we prove that the large-cardinal strength of the claim that cohomological localizations exist is strictly lower than~WVP.

Although the core of this article deals with simplicial sets and spectra, we formulate our main conclusions in the language of $\infty$-categories as well. The article is written so that both those who are familiar with $\infty$-categories and those who are less familiar can read it.
We use fibrant simplicial categories \cite{Bergner} as models for $\infty$-categories. For every simplicial model category $\calM$, the full subcategory $\calMo$ spanned by the objects that are fibrant and cofibrant in $\calM$ is enriched in Kan complexes and therefore can be viewed as an $\infty$\nobreakdash-category. Indeed, its coherent nerve $N(\calMo)$ is a quasi-category in the sense of~\cite{Joyal,Lurie}. As shown in~\cite{Lurie}, every \emph{presentable} $\infty$-category is equivalent to $\calMo$ for some combinatorial simplicial model category~$\calM$.

A crucial advantage of implementing our results in the framework of $\infty$\nobreakdash-cat\-egories is that the reflections on the homotopy category of $\calM$ obtained by means of WVP (see Section~\ref{reflections}) can be promoted to $\infty$-categorical localizations on~$\calMo$, as also found by Lo Monaco in \cite{LoMonaco} within a detailed study of the use of Vop\v{e}nka principles in $\infty$-category theory. Thus we also conclude that WVP is equivalent to the claim that $\calS$-localization exists for every class of morphisms $\calS$ in any presentable $\infty$-category. 

One could then ask if WVP is also sufficient to infer the existence of left Bousfield localizations of left proper combinatorial model categories $\calM$ with respect to arbitrary classes of morphisms. This is known to be true assuming~VP, by \cite[Theorem~2.3]{RT} or \cite[Lemma~1.4]{CCh}.
In the last section of this article, we prove that WVP implies indeed the existence of left Bousfield localization with respect to every class of morphisms~$\calS$, yet with several shortcomings. In general, we can only guarantee the existence of a (right) \emph{semi-model} category structure on the localized category~$\calM_\calS$, with factorizations that are not necessarily functorial. Moreover, we need to assume that all objects in $\calM$ are cofibrant. 
By \cite[Corollary~1.2]{Dugger1},
every combinatorial model category is Quillen equivalent to a simplicial one in which all objects are cofibrant. However, expliciting the interaction between Quillen equivalences and Bousfield localizations would require a more focused analysis.

\newpage

\noindent
\textbf{Acknowledgements.} We are indebted to Ji\v{r}\'{\i} Rosick\'y for sharing with us the main arguments used in Section~\ref{reflections}. We also thank Giulio Lo Monaco, Giuseppe Leoncini, and Leonid Positselski for their very useful remarks on earlier drafts of this article, as well as Joan Bagaria and Jeff Bergfalk for checking and improving the content of Section~\ref{cohomological-localizations}. This work was supported by MCIN/AEI under I+D+i grant PID2020-117971GB-C22 and by the Departament de Recerca i Universitats de 
la Generalitat de Catalunya with reference 2021 SGR 00697.

\section{Preliminaries}
\label{preliminaries}

A category $\calC$ is \emph{locally presentable} if it is cocomplete and there is a regular cardinal $\lambda$ and a set $\calA$ of $\lambda$\nobreakdash-presentable objects of $\calC$ such that every object of $\calC$ is a 
$\lambda$\nobreakdash-filtered colimit of objects from~$\calA$.
An object $X$ of $\calC$ is \emph{$\lambda$-presentable} if the functor $\calC(X,-)$ preserves $\lambda$\nobreakdash-filtered colimits; see \cite[\S 6.1]{GU} for further information about presentability.

By \cite[Corollary~1.28]{AR}, locally presentable categories are also complete.
Every category of structures in any first-order (possibly infinitary) language is locally presentable \cite[Chapter~5]{AR}, and the forgetful functor from structures into sets creates limits and colimits. 

\subsection{Large-cardinal principles}
\label{large-cardinals}

Our set-theoretic framework is that of the Zermelo--Fraenkel axioms with the axiom of choice (ZFC). By default, we work in the von Neumann universe $V$ of all sets, and a \emph{class} is any collection of~sets $\{x\mid\varphi(x)\}$ defined by some formula $\varphi$ in the first-order language of set theory with a single free variable, possibly with parameters. We denote by Ord the class of all ordinals.

Our results can also be interpreted in a chosen Grothendieck universe. This formalism is equivalent to assuming the existence of an inaccessible cardinal $\kappa$ and considering only sets that belong to the stage $V_\kappa$ of the cumulative hierarchy \cite{Jech}, i.e., sets whose rank is smaller than~$\kappa$. Such sets are called \emph{small sets} when necessary to avoid confusion, while the subsets of $V_\kappa$ are \emph{classes}. A~class $\calC$ is defined by a formula $\varphi$ if $\calC=\{x\in V_\kappa \mid V_\kappa \models\varphi(x)\}$. If this formalism is adopted, then Ord denotes the class of ordinals $i$ such that $i<\kappa$, and, consequently, large-cardinal statements such as Vop\v{e}nka's principle are understood to hold in~$V_\kappa$.

Categories are assumed to be \emph{locally small}, meaning that, for every two objects $X$ and~$Y$, the morphisms $X\to Y$ form a set ---or a small set, if one chooses to work in a Grothendieck universe.
We also assume that $\infty$\nobreakdash-categories are locally small, that is, for all objects $X$ and~$Y$, the mapping space $\map(X,Y)$ is weakly equivalent to a (small) Kan simplicial set. 

The class of ordinals ${\rm Ord}$ is viewed as a category with a unique morphism $\lambda\to\mu$ if and only if $\lambda\le\mu$.
The main large-cardinal principles considered in this article can be stated as follows.
\begin{itemize}
\item[(a)] Vop\v{e}nka's principle (VP):
There is no sequence $\langle X_i \mid i\in{\rm Ord}\rangle$ of objects in a locally presentable category $\mathcal C$ such that ${\mathcal C}(X_i,X_j)$ is a singleton if $i\le j$ and it is empty if $i>j$. 
\item[(b)] Weak Vop\v{e}nka's principle (WVP):
There is no sequence $\langle X_i \mid i\in{\rm Ord}\rangle$ of objects in a locally presentable category $\mathcal C$ such that ${\mathcal C}(X_i,X_j)$ is a singleton if $i\ge j$ and it is empty if $i<j$. 
\item[(c)] Semi-weak Vop\v{e}nka's principle (SWVP):
There is no sequence of objects $\langle X_i \mid i\in{\rm Ord}\rangle$ in a locally presentable category $\mathcal C$ such that
${\mathcal C}(X_i,X_j)$ is nonempty if $i\ge j$ and it is empty if $i<j$. 
\end{itemize}

As shown in~\cite{Wilson1,Wilson2}, WVP is equivalent to SWVP, and also equivalent to the statement that the class of all ordinals is Woodin. It is also proved in \cite{Wilson1} that the large-cardinal strength of VP is much higher.

\subsection{Orthogonality}
\label{orthogonality}

A morphism $f\colon A\to B$ and an object $X$ in a category $\calC$ are called \emph{orthogonal} if the function $\calC(B,X)\to\calC(A,X)$ sending every $g\colon B\to X$ to $g\circ f$ is a bijection of sets. This notion goes back to \cite{FK} and is central in most instances of localization theory. The dual notion, where one asks the function $\calC(X,A)\to\calC(X,B)$ sending every $h\colon X\to A$ to $f\circ h$ to be bijective, will not be considered in this article.

A morphism $r\colon X\to D$ in a category $\calC$ is called a \emph{reflection} of $X$ onto a class $\calD$ of objects if $D\in\calD$ and $r$ is orthogonal to all objects in~$\calD$, that is, for every morphism $f\colon X\to E$ where $E\in\calD$ there is a unique $g\colon D\to E$ such that $g\circ r=f$. If the uniqueness condition on $g$ is dropped, then $r$ is called a \emph{weak reflection} of $X$ onto~$\calD$.

Homotopical versions of orthogonality can be defined using either the formalism of Quillen model categories or in terms of $\infty$-categories. Although the two approaches are essentially equivalent, we provide details for both and describe their relationship in the next subsections.

\subsubsection{Orthogonality in simplicial model categories}
\label{model-orthogonality}

Let $\calM$ be a simplicial model category, and denote the simplicial enrichment by~$\Map(-,-)$.
In this article we assume that model categories admit functorial factorizations, except in some parts of Section~\ref{functoriality}. Thus, we may choose a
cofibrant replacement functor $Q$ and a fibrant replacement functor $R$ in~$\calM$,
equipped with natural transformations $Q\to{\rm Id}$ and ${\rm Id}\to R$. 

We say that a morphism $f\colon A\to B$ and an object $X$ in $\calM$ are \emph{simplicially orthogonal} if the map of simplicial sets
\[
\Map(QB,RX)\longrightarrow\Map(QA,RX)
\]
induced by $Qf\colon QA\to QB$
is a weak equivalence. We denote this fact by writing $f\perp X$, and call them \emph{orthogonal} instead of simplicially orthogonal if no confusion can arise.

The objects of the \emph{homotopy category} $\Ho(\calM)$ are those of $\calM$, and the set of morphisms from $X$ to $Y$ in $\Ho(\calM)$ is the set of connected components $\pi_0(\Map(QX,RY))$, which is denoted by $[X,Y]$ and coincides with the set of homotopy classes of morphisms $QX\to RY$. 
If a morphism $f\colon A\to B$ and an object $X$ are simplicially orthogonal in $\calM$ then they are orthogonal in~$\Ho(\calM)$, although the converse is not true in general.

For a class $\calS$ of morphisms in~$\calM$, its \emph{orthogonal} class of objects is 
\[
\calS^{\perp}=
\{X\in\calM \mid f\perp X \;\text{for all $f\colon A\to B$ in $\calS$}\},
\]
and, similarly, given a class $\cal D$ of objects, we denote
\[
\calD^{\perp}=\{f\colon A\to B\mid f\perp X \;\text{for all $X$ in $\calD$}\}.
\]

Given a class of morphisms $\calS$ in~$\calM$, an \emph{$\calS$-localization} of an object $X$ is a morphism $\ell\colon X\to LX$ in $\calS^{\perp\perp}$ such that $LX\in\calS^{\perp}$. The objects in $\calS^{\perp}$ are called \emph{$\calS$-local} and the morphisms in $\calS^{\perp\perp}$ are called \emph{$\calS$\nobreakdash-equiv\-alences}. 

If $\ell\colon X\to LX$ is an $\calS$-localization of~$X$, then $\ell$ is also a reflection of $X$ onto $\calS^\perp$ in the homotopy category $\Ho(\calM)$.
A~refinement of this statement is given in Proposition~\ref{Prop1-2} below.

If every object of $\calM$ admits an $\calS$-localization, then the full subcategory spanned by $\calS^\perp$ in $\Ho(\calM)$ is \emph{reflective}, which means that its inclusion into $\Ho(\calM)$ has a left adjoint. We follow standard terminology by calling the corresponding reflector an \emph{$\calS$-localization} functor on~$\Ho(\calM)$, in spite of the ambiguity of this term. In general, localizations need not be functorial on the model category~$\calM$.

\subsubsection{Orthogonality in \boldmath$\infty$-categories}
\label{infinity-orthogonality}

Higher-categorical orthogonality is defined as follows. A~morphism $f\colon A\to B$ and an object $X$ in an $\infty$-category $\calC$ are \emph{orthogonal} if the induced map of 
$\infty$-groupoids
$\map(B,X)\to\map(A,X)$
is an equivalence. 

This notion can be made more explicit by choosing a specific model for $\infty$\nobreakdash-categories. In this article we use fibrant simplicial categories, i.e., categories enriched in simplicial sets that are fibrant in the Bergner model structure \cite{Bergner}, which means precisely that $\map(X,Y)$ is a Kan complex for all $X$ and~$Y$. If $\calC$ is a fibrant simplicial category, then its coherent nerve $N\calC$ is a quasi-category, that is, a fibrant object in the Joyal model structure on simplicial sets~\cite{Joyal}. 

The \emph{homotopy category} ${\rm h}\calC$ of an $\infty$-category $\calC$ has the same objects as~$\calC$ and ${\rm h}\calC(X,Y)=\pi_0(\map(X,Y))$, which we also denote by $[X,Y]$.

Our terminology and notation for orthogonality is the same as in simplicial model categories. Thus, if $\calS$ is a class of morphisms in an $\infty$-category~$\calC$, an \emph{$\calS$-localization} of an object $X$ is a morphism $\ell\colon X\to LX$ in $\calS^{\perp\perp}$ such that $LX\in\calS^\perp$.
If every $X$ has an $\calS$-localization, then the full subcategory spanned by~$\calS^{\perp}$ is called \emph{reflective},
as in \cite[5.2.7.9]{Lurie}, and $L$ becomes an endo\-functor of~$\calC$, called a \emph{reflector}. Its codomain restriction is left adjoint, in the $\infty$-categorical sense, to the inclusion of $\calS^{\perp}$ into~$\calC$.

As in the previous subsection, every $\calS$-localization $\ell\colon X\to LX$ in an $\infty$-category $\calC$ yields a reflection onto $\calS^{\perp}$ in the homotopy category ${\rm h}\calC$. 

\subsubsection{Comparison}
\label{comparison}

If $\calM$ is a simplicial model category with simplicial enrichment $\Map(-,-)$, then its full subcategory $\calMo$ of fibrant-cofibrant objects is a fibrant simplicial category with $\map(X,Y)=\Map(X,Y)$, and hence it can be viewed as an $\infty$-category such that ${\rm h}(\calMo)$ is equivalent to~$\Ho(\calM)$ through the functor $RQ\colon \calM\to\calMo$, where $Q$ is a cofibrant replacement functor and $R$ is a fibrant replacement functor on~$\calM$.
Furthermore, orthogonality $f\perp X$ in $\calM$ is equivalent to orthogonality $RQf\perp RQX$ in $\calMo$, since \[
\map(RQB,RQX)=\Map(RQB,RQX)\simeq\Map(QB,RX),
\]
by the homotopy invariance of $\Map(-,-)$ if the domain is cofibrant and the codomain is fibrant.

If $\calS$ is a class of morphisms in $\calMo$, then the orthogonal complements of $\calS$ in $\calM$ and $\calMo$, which we denote respectively with
$\calS^\perp(\calM)$ and $\calS^\perp(\calMo)$, are related as follows. We denote by $RQ\calS$ the class $\{RQf\mid f\in\calS\}$.

\begin{proposition}
\label{Prop1-1}
Let $\calM$ be a simplicial model category with cofibrant replacement $Q$ and fibrant replacement~$R$. Given a class of morphisms $\calS$ in $\calM$, the class $\calS^\perp(\calM)$ is the closure of $(RQ\calS)^\perp(\calMo)$ under weak equivalences.
\end{proposition}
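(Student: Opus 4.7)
The plan is to prove the equality by double inclusion, using as the main tool the comparison statement just preceding the proposition: simplicial orthogonality $f\perp X$ in $\calM$ is equivalent to orthogonality $RQf\perp RQX$ in $\calMo$, because $\Map(QB,RX)\simeq\Map(RQB,RQX)$ and likewise for $A$. In particular this gives $X\in\calS^{\perp}(\calM)$ if and only if $RQX\in(RQ\calS)^{\perp}(\calMo)$, and as an immediate consequence $(RQ\calS)^{\perp}(\calMo)\subseteq\calS^{\perp}(\calM)$, since every fibrant-cofibrant object $Y$ is weakly equivalent to $RQY\simeq Y$.

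Next I would show that $\calS^{\perp}(\calM)$ is closed under weak equivalences in $\calM$. If $X\to X'$ is a weak equivalence, functoriality of $R$ produces a weak equivalence $RX\to RX'$ between fibrant objects, and for any cofibrant object $Z$ the induced map $\Map(Z,RX)\to\Map(Z,RX')$ is a weak equivalence of simplicial sets by homotopy invariance of $\Map$. Applying this to $Z=QA$ and $Z=QB$ for each $f\colon A\to B$ in $\calS$ shows that $\Map(QB,RX)\to\Map(QA,RX)$ is a weak equivalence precisely when $\Map(QB,RX')\to\Map(QA,RX')$ is, so the property of being $\calS$-local is invariant under weak equivalences.

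Combining the two observations yields one inclusion: the weak-equivalence closure of $(RQ\calS)^{\perp}(\calMo)$ sits inside $\calS^{\perp}(\calM)$. For the reverse inclusion, given any $X\in\calS^{\perp}(\calM)$, the comparison produces $RQX\in(RQ\calS)^{\perp}(\calMo)$, and $X$ is connected to $RQX$ by the natural zigzag of weak equivalences $X\leftarrow QX\to RQX$; hence $X$ lies in the claimed closure.

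The main obstacle is purely bookkeeping: one must keep careful track of which objects are fibrant, which are cofibrant, and which maps between them are weak equivalences, so that the standard homotopy invariance of $\Map(-,-)$ on cofibrant sources and fibrant targets can be applied in each step. No deeper input is required beyond the comparison already set up in Subsection~\ref{comparison}.
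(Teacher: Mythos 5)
Your proposal is correct and follows essentially the same route as the paper: both inclusions come from the comparison $\Map(RQB,RQX)\simeq\Map(QB,RX)$ of Subsection~\ref{comparison}, together with the closure of $\calS^{\perp}(\calM)$ under weak equivalences and the zigzag $X\leftarrow QX\to RQX$. The paper's proof is just a more compressed version of the same argument, routing the first inclusion through the intermediate class $(RQ\calS)^{\perp}(\calM)$ rather than invoking the biconditional directly.
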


\begin{proof}
The class $\calS^\perp(\calM)$ is closed under weak equivalences and coincides with $(RQ\calS)^\perp(\calM)$, since $RQf$ is weakly equivalent to~$f$ for all~$f$.
Clearly $(RQ\calS)^\perp(\calMo)\subseteq (RQ\calS)^\perp(\calM)$. If $X\in\calS^\perp(\calM)$ then $RQX\in(RQ\calS)^\perp(\calMo)$; hence $X$ is weakly equivalent to an object in~$(RQ\calS)^\perp(\calMo)$.
\end{proof}

The class $\calS^\perp(\calM)$ is closed under homotopy limits in~$\calM$, while the class $(RQ\calS)^\perp(\calMo)$ is closed under $\infty$-categorical limits in~$\calMo$. 
Although $\calMo$ need not be closed under products in~$\calM$, the inclusion $\calMo\hookrightarrow\calM$ preserves products up to weak equivalence.
Specifically, if $\{X_i\}_{i\in I}$ is a (small) set of objects in~$\calMo$, then their product in $\calMo$ is a cofibrant replacement of $\prod_{i\in I}X_i$ in~$\calM$.

\subsubsection{Underlying \boldmath$\infty$-category}
\label{underlying}

Simplicial orthogonality can be defined in any model category $\calM$ (not necessarily simplicial) by means of the \emph{hammock localization} \cite{DK1,DK2} of $\calM$ with respect to the class of weak equivalences of~$\calM$. This is a simplicial category $L^H\calM$ with the same objects as~$\calM$, whose homotopy category is equivalent to $\Ho(\calM)$. Hence, mapping spaces in $L^H\calM$ allow us to define orthogonality and $\calS$-localizations in~$\calM$. 

If the model category $\calM$ is simplicial, then $L^H\calM$ and $\calMo$ are weakly equivalent as simplicial categories by \cite[\S\;4]{DK2}, and therefore they define the same orthogonality relation.

If $\mathbf{R}$ is a fibrant replacement functor for the Bergner model structure on simplicial categories, then $\mathbf{R} L^H\calM$ is called the \emph{underlying} $\infty$-category of $\calM$, as in \cite{Hinich,Mazel-gee}. Indeed, when the model category $\calM$ is simplicial, $\mathbf{R} L^H\calM$ and $\calMo$ become equivalent as $\infty$-categories, since they are weakly equivalent as simplicial categories.

\subsubsection{Homotopy reflections}
\label{homotopy-reflections}

In this subsection we show that, under mild assumptions, an $\calS$-localization in a simplicial model category $\calM$ is the same as a reflection onto $\calS^\perp$ in $\Ho(\calM)$, and similarly for an $\infty$-category~$\calC$. We note that, if $\calC=\calMo$ for a simplicial model category $\calM$, then the tensoring and cotensoring of $\calM$ induce a tensoring and cotensoring of~$\calC$. 

\begin{proposition}
\label{Prop1-2}
Let $\calS$ be a class of morphisms in a category $\calC$ enriched in Kan complexes and cotensored over simplicial sets, and let $X$ be any object of~$\calC$. A morphism $r\colon X\to D$ is an $\calS$-localization of~$X$ if and only if $r$ is a reflection of $X$ onto $\calS^{\perp}$ in the homotopy category ${\rm h}\calC$.
\end{proposition}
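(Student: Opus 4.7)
The forward implication is essentially immediate: if $r$ is simplicially orthogonal to every $E\in\calS^{\perp}$, then taking $\pi_{0}$ of $\map(D,E)\to\map(X,E)$ shows that $[D,E]\to[X,E]$ is a bijection, so the assumption $D\in\calS^{\perp}$ exhibits $r$ as a reflection in~${\rm h}\calC$. The substance of the proposition lies in the converse: assuming $D\in\calS^{\perp}$ and that $[D,E]\to[X,E]$ is a bijection for every $E\in\calS^{\perp}$, I need to show that
\[
r^{*}\colon\map(D,E)\longrightarrow\map(X,E)
\]
is a weak equivalence of Kan complexes for each $E\in\calS^{\perp}$. The plan is to use the cotensoring of $\calC$ over simplicial sets to promote $\pi_{0}$-information to all homotopy groups.

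The first step is to check that $\calS^{\perp}$ is stable under cotensoring: for every simplicial set $K$ and every $E\in\calS^{\perp}$, the cotensor $E^{K}$ again belongs to $\calS^{\perp}$. Indeed, the defining natural isomorphism $\map(Y,E^{K})\cong\map(Y,E)^{K}$, where $(-)^{K}$ denotes the cotensor in $\sSet$, exhibits $\map(B,E^{K})\to\map(A,E^{K})$ as the image of the weak equivalence $\map(B,E)\to\map(A,E)$ (for $f\colon A\to B$ in~$\calS$) under $(-)^{K}$, and this operation preserves weak equivalences between Kan complexes.

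The second step is to apply the reflection hypothesis to $E^{K}$ in place of~$E$: the resulting bijection $[D,E^{K}]\to[X,E^{K}]$ translates, via the cotensor identification and the fact that $\pi_{0}(L^{K})\cong[K,L]$ for a Kan complex $L$, into a bijection
\[
[K,\map(D,E)]\longrightarrow[K,\map(X,E)]
\]
natural in the simplicial set~$K$ and induced by post-composition with $r^{*}$. The Yoneda lemma applied in ${\rm Ho}(\sSet)$ then forces $r^{*}$ to be an isomorphism there, hence a weak equivalence of Kan complexes.

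The main thing to navigate is the gap between the $\pi_{0}$-level reflection hypothesis and the full weak equivalence of mapping spaces in the conclusion; the cotensor--Yoneda trick bridges this gap, and once the cotensor preservation of $\calS^{\perp}$ is in place, the remainder is formal.
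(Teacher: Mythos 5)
Your proposal is correct and follows essentially the same route as the paper: both prove the converse by showing $E^{W}\in\calS^{\perp}$ for every simplicial set $W$ (you via the isomorphism $\map(Y,E^{W})\cong\map(Y,E)^{W}$ and preservation of weak equivalences between Kan complexes, the paper via an equivalent computation with $[V\times W,\map(-,E)]$), and then both upgrade the $\pi_0$-level bijection to bijections $[W,\map(D,E)]\cong[W,\map(X,E)]$ for all $W$, which forces $r^{*}$ to be a weak equivalence.
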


\begin{proof}
Suppose first that $r\colon X\to D$ is an $\calS$-localization. Then for every $E\in\calS^\perp$ the morphism $r$ induces a weak equivalence
\[
\map(D,E)\simeq \map(X,E),
\]
which yields a bijection $[D,E]\cong [X,E]$ on $\pi_0$. This means precisely that $r$ is a reflection onto~$\calS^\perp$ in~${\rm h}\calC$.
For the converse, we let $r\colon X\to D$ be a reflection onto~$\calS^\perp$, and aim to prove that $r\in\calS^{\perp\perp}$.
Hence we need to prove that for every $E\in\calS^\perp$ the induced map
\begin{equation}
\label{r*}
r^*\colon \map(D,E)\longrightarrow \map(X,E)
\end{equation}
is a weak equivalence.
For this, we prove that \eqref{r*} induces bijections
\[
[W,\map(D,E)]\cong [W,\map(X,E)]
\]
for every simplicial set~$W$.
Using the cotensoring of $\calC$, we obtain bijections
\begin{align}
\label{wmapde}
[W,\map(D,E)] & \cong 
\pi_0\big(\map(D,E)^W\big)
\cong \pi_0\big(\map\big(D,E^W\big)\big)\cong\big[D,E^W\big].
\end{align}
Next we show that $E^W\in\calS^\perp$.
If $f\colon A\to B$ is in~$\calS$, then, since $E\in\calS^\perp$, the morphism $f$ induces a weak equivalence $\map(B,E)\simeq\map(A,E)$. Therefore, for every simplicial set $V$ we have
\begin{align*}
\big[V, {\rm map} & \big(B,E^W\big)\big] \cong
\pi_0\Big(\map(B,E^W\big)^V\Big) \cong
\pi_0\big(\map\big(B,E^{V\times W}\big)\big) \\[0.1cm] & \cong
[V\times W,{\rm map}(B,E)] 
\cong [V\times W,{\rm map}(A,E)] \cong \big[V,{\rm map}\big(A,E^W\big)\big].
\end{align*}
Hence $f$ induces a weak 
equivalence ${\rm map}\big(B,E^W\big)\simeq {\rm map}\big(A,E^W\big)$, as needed. Going back to~\eqref{wmapde}, we conclude that $r$ induces bijections
\[
[W,\map(D,E)]\cong\big[D,E^W\big]\cong\big[X,E^W\big]\cong[W,\map(X,E)],
\]
which completes the argument.
\end{proof}

\begin{corollary}
\label{Cor1-3}
Let $\calS$ be a class of morphisms in a simplicial model category~$\calM$, and let $X$ be an object of~$\calM$. A morphism $r\colon X\to D$ in $\calM$ is an $\calS$\nobreakdash-loc\-al\-ization of $X$ if and only if $r$ is a reflection of $X$ onto $\calS^{\perp}$ in the homotopy category $\Ho(\calM)$.
\end{corollary}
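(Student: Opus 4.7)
The plan is to reduce the claim to Proposition~\ref{Prop1-2} applied to the full subcategory $\calMo$ of fibrant-cofibrant objects of $\calM$, with the class of morphisms $RQ\calS$, and then to transfer the resulting equivalence back to $\calM$ using the comparison described in Section~\ref{comparison}.

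First I would verify that $\calMo$ satisfies the hypotheses of Proposition~\ref{Prop1-2}. It is enriched in Kan complexes, since mapping spaces between fibrant-cofibrant objects of a simplicial model category are Kan complexes, and, as noted in the paragraph preceding Proposition~\ref{Prop1-2}, it inherits a cotensoring over simplicial sets from~$\calM$. Therefore Proposition~\ref{Prop1-2} applies to any class of morphisms in $\calMo$, in particular to $RQ\calS$.

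Next I would match each of the two conditions of the corollary with its $\calMo$-counterpart. Since simplicial orthogonality $f\perp X$ in $\calM$ is equivalent to $RQf\perp RQX$ in $\calMo$ (Section~\ref{comparison}), a morphism $r\colon X\to D$ is an $\calS$-localization in $\calM$ if and only if $RQr\colon RQX\to RQD$ is an $(RQ\calS)$-localization in~$\calMo$. On the other hand, $RQ$ induces an equivalence $\Ho(\calM)\simeq {\rm h}(\calMo)$, and, by Proposition~\ref{Prop1-1}, $\calS^{\perp}(\calM)$ is the closure under weak equivalences of $(RQ\calS)^{\perp}(\calMo)$; hence $r$ is a reflection of $X$ onto $\calS^{\perp}$ in $\Ho(\calM)$ if and only if $RQr$ is a reflection of $RQX$ onto $(RQ\calS)^{\perp}(\calMo)$ in ${\rm h}(\calMo)$.

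Combining these two translations with Proposition~\ref{Prop1-2} applied to $RQ\calS$ in $\calMo$ yields the corollary. The only delicate point is the second step, namely the bookkeeping that $\calS$-localizations in $\calM$ and reflections onto $\calS^{\perp}$ in $\Ho(\calM)$ correspond under $RQ$ to their analogues for $RQ\calS$ in $\calMo$; this is routine once Proposition~\ref{Prop1-1} and the definitions of Sections~\ref{model-orthogonality}--\ref{comparison} are unwound.
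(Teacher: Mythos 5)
Your proposal is correct, but it is organized differently from the paper's proof. The paper re-runs the argument of Proposition~\ref{Prop1-2} directly inside $\calM$: given a reflection $r$ onto $\calS^\perp$ in $\Ho(\calM)$ and a fibrant $E\in\calS^\perp$, it shows that $r^*\colon\Map(QD,E)\to\Map(QX,E)$ is a weak equivalence by the same cotensor argument, the only new ingredient being the observation that cotensoring with a simplicial set is a right Quillen endofunctor and hence $E^W$ is again fibrant (so the adjunction identities $[W,\Map(QD,E)]\cong[D,E^W]$ are available in $\calM$). You instead invoke Proposition~\ref{Prop1-2} as a black box for $\calMo$ and the class $RQ\calS$, and transfer both conditions along the functor $RQ\colon\calM\to\calMo$ using Proposition~\ref{Prop1-1} and the orthogonality comparison of Section~\ref{comparison}. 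Both routes rest on the same core computation; yours buys a purely formal deduction at the price of the translation bookkeeping (that $r$ is an $\calS$-localization iff $RQr$ is an $(RQ\calS)$-localization, and that the subcategories spanned by $\calS^\perp(\calM)$ and $(RQ\calS)^\perp(\calMo)$ correspond under $\Ho(\calM)\simeq{\rm h}(\calMo)$), while the paper's buys brevity by staying in $\calM$. Your reduction does correctly exploit the crucial hypothesis that $r$ is an honest morphism of $\calM$ --- which is what makes $RQr$ a genuine morphism of $\calMo$ to which Proposition~\ref{Prop1-2} applies; as the paper remarks after the corollary, a mere $\Ho(\calM)$-morphism would not lift in general, so this point should be made explicit rather than absorbed into the ``routine bookkeeping.''
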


\begin{proof}
Only one implication needs to be proved. Suppose that $r\colon X\to D$ is a reflection onto $\calS^\perp$ in $\Ho(\calM)$. Let $E\in\calS^\perp$, which we may assume fibrant. Then the induced map
\[
r^*\colon \Map(QD,E)\longrightarrow\Map(QX,E)
\]
is shown to be a weak equivalence with the same argument used for \eqref{r*}
in the proof of Proposition~\ref{Prop1-2}. Cotensoring with a simplicial set is a right Quillen endofunctor of $\calM$ and therefore it preserves fibrant objects.
\end{proof}

The assumption that a morphism $X\to D$ is given is crucial in Corollary~\ref{Cor1-3}.
Otherwise, a reflection of $X$ onto $\calS^\perp$ in $\Ho(\calM)$ need not lift to an $\calS$-localization of~$X$, unless $X$ is cofibrant in~$\calM$.

We also emphasize that it is not true that every localization functor $L$ on $\Ho(\calM)$ ---onto an arbitrary class~$\calD$--- is an $\calS$-localization for some class of morphisms~$\calS$. A~counter\-example is shown in Example~\ref{ex3} below. Typically such a failure is due to the fact that the class $\calD$ does not share the closure properties of a simplicial orthogonal complement of a class of morphisms. Yet, the corresponding $\infty$\nobreakdash-categorical statement holds, as stated next.

\begin{proposition}
\label{Prop1-4}
Let $L$ be a reflector on an $\infty$-category $\calC$, and let $\calS$ be the class of morphisms $f$ such that $Lf$ is invertible. Then $L$ is an $\calS$-localization.
\end{proposition}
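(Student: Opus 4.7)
The plan is to verify, for every object $X$, that the unit morphism $\eta_X \colon X \to LX$ of the reflector is an $\calS$-localization. Unpacking the definition, this requires showing that $LX \in \calS^\perp$ and that $\eta_X \in \calS^{\perp\perp}$.

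For the second condition, the trivial inclusion $\calS \subseteq \calS^{\perp\perp}$ reduces matters to showing that $\eta_X$ itself belongs to~$\calS$, i.e., that $L\eta_X$ is invertible. Writing $L = i \circ L'$, where $i \colon \calD \hookrightarrow \calC$ is the inclusion of the essential image (the reflective subcategory) and $L' \dashv i$ is the corresponding $\infty$-categorical adjunction, the counit $\epsilon$ is an equivalence because $i$ is fully faithful. The $\infty$-categorical triangle identity $\epsilon_{L'X} \circ L'\eta_X \simeq \mathrm{id}_{L'X}$ then forces $L'\eta_X$ to be an equivalence, and applying $i$ yields that $L\eta_X$ is an equivalence.

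For the first condition, let $f \colon A \to B$ be any morphism in $\calS$, so that $Lf$ is invertible. Since $LX \in \calD$, the reflection property of each unit map says that $\eta_Z^{*} \colon \map(LZ, LX) \to \map(Z, LX)$ is an equivalence for every object~$Z$. Naturality of $\eta$ applied to $f$ produces a commutative square whose vertical maps $\eta_A^{*}$ and $\eta_B^{*}$ are such equivalences, and whose top horizontal map $(Lf)^{*} \colon \map(LB, LX) \to \map(LA, LX)$ is an equivalence because $Lf$ is invertible. The bottom horizontal map $f^{*} \colon \map(B, LX) \to \map(A, LX)$ is then forced to be an equivalence as well, yielding $f \perp LX$. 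As $f$ was arbitrary, $LX \in \calS^\perp$.

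The proof is entirely formal and relies only on standard properties of reflective subcategories of $\infty$-categories. The only delicate point is the correct invocation of the $\infty$-categorical triangle identity, where equivalences replace equalities; this is well-established in the theory of adjunctions between $\infty$-categories, for instance in~\cite{Lurie}, and causes no genuine difficulty.
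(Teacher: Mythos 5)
Your proof is correct and follows essentially the same route as the paper: membership of the unit in $\calS^{\perp\perp}$ via the inclusion $\calS\subseteq\calS^{\perp\perp}$, and $LX\in\calS^\perp$ via the naturality square for $\eta$ whose other three arrows are equivalences. The only difference is that you spell out, via the triangle identity, why $L\eta_X$ is invertible, a point the paper simply asserts.
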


\begin{proof}
For each object~$X$, the reflection $\ell_X\colon X\to LX$ belongs to~$\calS$, hence also to~$\calS^{\perp\perp}$. Moreover, $LX\in\calS^\perp$, since if $f\colon A\to B$ is in $\calS$ then, similarly as in \cite[5.5.4.2]{Lurie}, the following diagram is homotopy commutative and three of its arrows are weak equivalences by assumption:
\[
\xymatrix@C=4pc@R=3pc{
  \map(LB,LX) \ar[r]^-{(Lf)^*}_{\simeq} \ar[d]_{\ell_B^*}^{\simeq} & \map(LA,LX) \ar[d]^{\ell_A^*}_{\simeq} \\
  \map(B,LX) \ar[r]^-{f^*} & \map(A,LX),
}
\]
from which it follows that $f^*$ is also a weak equivalence.
\end{proof}

\subsubsection{Orthogonality in spectra}
\label{spectra-orghogonality}

By a \emph{spectrum} we mean an object of any simplicial model category $\Spectra$ whose underlying category is locally presentable and whose homotopy category is equivalent to the classical stable homotopy category ---for example, the Bousfield--Friedlander category~\cite{BF} or the category of symmetric spectra over simplicial sets~\cite{HSS}.

Let $Q$ be a cofibrant replacement functor and $R$ a fibrant replacement functor on~$\Spectra$. Given spectra $X$ and $Y$, the function spectrum $F(X,Y)$ is defined by means of Brown representability in $\Ho(\Spectra)$ as a representing spectrum for the functor $[X\wedge(-),Y]$.
The homotopy groups of $F(X,Y)$ coincide in nonnegative degrees with those of the simplicial set $\Map(QX,RY)$, where $\Map(-,-)$ denotes the simplicial enrichment of~$\Spectra$. Hence, if we denote by $F^c(X,Y)$ the connective cover of $F(X,Y)$, then orthogonality of spectra in the sense of Subsection~\ref{model-orthogonality} can alternatively be formulated as follows: a map $f\colon A\to B$ and a spectrum $X$ are orthogonal if and only if the map
\[
F^c(B,X)\longrightarrow F^c(A,X)
\]
induced by $f$ is a weak equivalence of spectra, i.e., it induces isomorphisms of all homotopy groups; see \cite{Bousfield,CG} for more information.

If a simplicial model category $\calM$ is stable, then every class of the form $\calS^{\perp}$ is \emph{semicolocalizing}, that is, $\calS^{\perp}$ is closed under fibres, products, and extensions, hence also under retracts and desuspension \cite[\S\,1.2]{CGR}. Moreover, if $\calS$ is closed under desuspension (or, equivalently, $\calS^\perp$ is closed under suspension), then $\calS^{\perp}$ is \emph{colocalizing}, i.e., triangulated and closed under products.

For an arbitrary simplicial model category $\calM$, each class of the form $\calS^\perp$ is closed under homotopy limits and each class of the form $\calS^{\perp\perp}$ is closed under homotopy colimits; see~\cite{Hirschhorn} for a detailed proof. Moreover, both $\calS^\perp$ and $\calS^{\perp\perp}$ are closed under homotopy retracts.

\section{From weak reflections to reflections}
\label{reflections}

Recall from Section~\ref{orthogonality} that a full subcategory $\calD$ of a category $\calM$ is \emph{weakly reflective} if for every object $X\in\calM$ there is a morphism $f\colon X\to X^*$ with $X^*\in\calD$ such that, for every $g\colon X\to D$ with $D\in\calD$ there is an $h\colon X^*\to D$ (not necessarily unique) such that $h\circ f=g$. We then say that $f\colon X\to X^*$ is a \emph{weak reflection} of $X$ onto~$\calD$. If a weakly reflective subcategory is closed under retracts, then it is closed under products \cite[Remark~4.5(3)]{AR}.

If we denote by $(X\downarrow\calD)$ the coslice category of $X$ over~$\calD$, whose objects are morphisms $X\to D$ and whose morphisms are commutative triangles, then a weak reflection $X\to X^*$ onto $\calD$ is a weakly initial object of $(X\downarrow\calD)$.

Similarly, a subcategory $\calD$ of an $\infty$-category $\calC$ is \emph{weakly reflective} if the coslice category $(X\downarrow\calD)$, most often denoted by $\calD_{X/}$ in the context of $\infty$\nobreakdash-categories, has a weakly initial object for every object $X$ of~$\calC$. The $\infty$\nobreakdash-category $\calD_{X/}$ is defined as a pullback of the forgetful functor $\calC_{X/}\to\calC$ and the inclusion $\calD\hookrightarrow\calC$ over~$\calC$. An object $X\to D$ in $\calD_{X/}$ is weakly initial if the mapping space from it to any other object of $\calD_{X/}$ is nonempty.

\subsection{Existence of weak reflections}
\label{weak-reflections}

The following result was proved in \cite[Theorem~I.9]{AR1} 
for classes closed under products and retracts assuming  Vop\v{e}nka's principle, and it was pointed out in \cite[Remark~I.10]{AR1} that the semi-weak Vop\v{e}nka principle was sufficient for the validity of the proof. In our version, we use a similar argument to the one in~\cite{AR1},  without assuming closedness under retracts. 

\begin{proposition}
\label{Prop2-1}
Suppose that the semi-weak Vop\v{e}nka principle holds. If $\calM$ is any locally presentable category, then every full subcategory of $\calM$ closed under products is weakly reflective.
\end{proposition}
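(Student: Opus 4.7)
The plan is to argue by contradiction: assume there exists an object $X\in\calM$ for which $\calD$ admits no weak reflection, and then build a sequence indexed by $\rm Ord$ that violates SWVP.

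The natural arena for the construction is the coslice category $(X\downarrow\calM)$, which is locally presentable (coslices of locally presentable categories are locally presentable), and inside it the full subcategory $(X\downarrow\calD)$, which inherits closure under products from~$\calD$ since a product of $X\to D_i$ in the coslice is $X\to\prod D_i$ with the universal arrow. The assumption that no weak reflection exists at $X$ translates to: $(X\downarrow\calD)$ has no weakly initial object.

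I will define by transfinite recursion a sequence of objects $f_i\colon X\to D_i$ in $(X\downarrow\calD)$ such that, in the coslice,
\[
(X\downarrow\calD)(f_i,f_j)\neq\emptyset\ \text{if}\ i\ge j,\qquad (X\downarrow\calD)(f_i,f_j)=\emptyset\ \text{if}\ i<j.
\]
At stage $\alpha$, having the $f_i$ for $i<\alpha$, set $P=\prod_{i<\alpha}D_i$ with projections $q_i\colon P\to D_i$ and let $p\colon X\to P$ be induced by the~$f_i$; this already ensures the coherence $q_ip=f_i$, so $p$ maps to each earlier $f_i$ in the coslice. Because $(X\downarrow\calD)$ has no weakly initial object, $p$ is not weakly initial, so one can pick $g\colon X\to E$ in $(X\downarrow\calD)$ that does not factor through~$p$. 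Define
\[
D_\alpha=P\times E,\qquad f_\alpha=(p,g)\colon X\longrightarrow P\times E.
\]
The composites $q_i\pi_P\colon D_\alpha\to D_i$ give morphisms $f_\alpha\to f_i$ in the coslice for all $i<\alpha$, handling the nonemptiness condition. For the emptiness condition, suppose $h\colon D_i\to D_\alpha$ satisfies $hf_i=f_\alpha$. Projecting to $E$ yields $h'=\pi_E h\colon D_i\to E$ with $h'f_i=g$; then $\tilde h=h'q_i\colon P\to E$ satisfies $\tilde hp=h'q_ip=h'f_i=g$, contradicting the choice of $g$. Hence no such $h$ exists, completing the inductive step.

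The resulting sequence $\langle f_i\mid i\in\mathrm{Ord}\rangle$ in the locally presentable category $(X\downarrow\calM)$ has hom-sets nonempty exactly when $i\ge j$ and empty when $i<j$, contradicting SWVP. The main obstacle is engineering both halves of the hom-set dichotomy in a single step; this is resolved by the product-with-a-witness trick $D_\alpha=P\times E$, which forces compatibility with all prior $f_i$ via the $P$-factor while encoding the non-factorization of $g$ through the $E$-factor. A minor point to verify explicitly is that the coslice $(X\downarrow\calM)$ is locally presentable, which lets SWVP be applied there rather than in $\calM$ itself.
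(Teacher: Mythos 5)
Your argument is correct in substance, but it takes a genuinely different route from the paper's. The paper does not argue by contradiction from the start: it builds a canonical tower $r_i\colon X\to X_i$, where $r_i$ is the product of all morphisms from $X$ into objects of $\calD$ of rank below~$i$; each $r_i$ is then a weak reflection onto the product-closure of $\calD\cap V_i$, and the compatible projections $p_{ji}\colon X_j\to X_i$ come for free. A weak reflection onto $\calD$ exists as soon as this tower stabilizes (some $r_{i_X}$ factors every later $r_j$), and if it never stabilizes one extracts a subsequence violating SWVP in $(X\downarrow\calM)$. You instead assume directly that $(X\downarrow\calD)$ has no weakly initial object and manufacture the forbidden sequence by transfinite recursion via the product-with-witness trick $D_\alpha=P\times E$; your verification of both halves of the hom-set dichotomy is correct, and your use of the coslice matches the paper's. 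What the paper's version buys is that $\langle r_i\rangle$ is definable from $X$ and a formula for $\calD$ without any choices, which is exploited later (Theorem~4.2 estimates the L\'evy complexity of precisely this sequence). Your recursion, as written, picks an arbitrary witness $g\colon X\to E$ from a proper class at each of Ord-many stages, which does not obviously yield a class function --- a ``sequence'' in the sense of SWVP --- in ZFC without global choice. This is easily repaired: take $E$ to be the product of all witnesses of minimal von Neumann rank (a set), which lies in $\calD$ by product-closure and still fails to factor through $p$, since otherwise each factor would. With that amendment, and with the standing convention that closure under products includes the empty product, so that the terminal object lies in $\calD$ (needed at stage~$0$ and assumed explicitly in the paper), your proof is complete.
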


\begin{proof}
Suppose given a full subcategory $\calD$ of $\calM$ closed under products. Hence, in particular, $\calD$ contains the terminal object of~$\calM$, and we implicitly assume that $\calD$ is closed under isomorphisms. 

Let $V=\cup_{i\in{\rm Ord}}\, V_i$ denote the cumulative hierarchy \cite{Jech} of sets in~ZFC. For each ordinal~$i$, let $\calD_i=\calD\cap V_i$ be the set of all objects in $\calD$ whose rank is smaller than~$i$, and let $\overline{\calD}_i$ be the closure of $\calD_i$ under products and isomorphisms. Hence $\calD_i\subseteq\calD_j$ if $i\le j$, and
\[ 
\calD=\bigcup_{i\in{\rm Ord}} \calD_i=\bigcup_{i\in{\rm Ord}} \overline{\calD}_i.
\]

For each object $X$ of~$\calM$ and every ordinal~$i$, let 
$\calF_i^X=(X\downarrow\calD_i)$, and let $r_i^X\colon X\to X_i$ be the product of all the objects of $\calF_i^X$ 
(in case that $\calF_i^X=\emptyset$, we let $X_i$ be the terminal object of~$\calM$). Let us write $r_i$ instead of~$r_i^X$.

Every object $Y\in\overline{\calD}_i$ is isomorphic to a product $\prod_{\lambda\in\Lambda}D_\lambda$ with $D_\lambda\in\calD_i$ (not necessarily distinct), and each morphism $f\colon X\to Y$ is determined by a collection of morphisms $\{ \delta_\lambda\colon X\to D_\lambda\}_{\lambda\in\Lambda}$. 
Here each $\delta_\lambda$ is in $\calF_i^X$ and therefore it can be factored as
\[
X\stackrel{r_i}{\longrightarrow} X_i\stackrel{p_\lambda}{\longrightarrow} D_\lambda,
\]
where the second arrow is a projection. The morphisms $p_\lambda$ (possibly repeated) jointly yield a morphism $f_i\colon X_i\to Y$ such that $f_i\circ r_i=f$. Hence $r_i\colon X\to X_i$ is a weak reflection of $X$ onto~$\overline{\calD}_i$. 

Moreover, since $\calF_i^X$ embeds as a subcategory of $\calF_j^X$ if $i\le j$, there is a projection $p_{ji}\colon X_j\to X_i$ such that $p_{ji}\circ r_j=r_i$ whenever $i\le j$.
Therefore, $\langle r_i\colon X\to X_i\mid i\in{\rm Ord}\rangle$ can be viewed as a sequence of objects in the coslice category $(X\downarrow\calM)$ equipped with morphisms $p_{ji}\colon r_j\to r_i$ if $i\le j$.

To obtain a weak reflection of $X$ onto~$\calD$, it suffices to find an ordinal $i_X$ such that, for all $j> i_X$, the morphism $r_j$ can be factorized as $r_j=q\circ r_{i_X}$ for some $q\colon X_{i_X}\to X_j$. 
Suppose the contrary. Then there exist ordinals $i_0 < i_1 < \cdots < i_s < \cdots$,
where $s$ ranges over all the ordinals, such that, if $s<t$, then
\begin{equation}
\label{nomaps}
(X\downarrow\calM)(r_{i_s},r_{i_t})=\emptyset.
\end{equation}
Since $\calM$ is locally presentable, $(X\downarrow\calM)$ is also locally presentable by \cite[Proposition~1.57]{AR}, and therefore \eqref{nomaps} is incompatible with the semi-weak Vop\v{e}nka principle, as $(X\downarrow\calM)(r_{i_s},r_{i_t})\ne\emptyset$ if $s\ge t$.
\end{proof}

From a weak reflection onto a full subcategory $\calD$ it is possible to construct a reflection onto $\calD$ under suitable assumptions ---our reference is \cite[Theorem~2.2]{CGR}.
First, we need that idempotents split in the category under study, that is, for every morphism $e\colon X\to X$ such that $e\circ e=e$ there are morphisms $f\colon X\to Y$ and $g\colon Y\to X$ for some $Y$ such that $e=g\circ f$ and $f\circ g={\rm id}$. Second, $\calD$ should be closed under retracts. Third and most fundamentally, $\calD$ should have the following closure property: every pair of parallel arrows between two objects in $\calD$ admits a weak equalizer in~$\calD$.
This happens in homotopy categories of model categories for classes of objects $\calD$ closed under homotopy limits, which is the case, for example, if $\calD=\calS^{\perp}$ for some class of morphisms~$\calS$. 

We state the following result in terms of model categories. As explained in Subsection~\ref{underlying}, orthogonality in a model category $\calM$ is meant through the mapping spaces of the hammock localization $L^H\calM$.

\begin{proposition}
\label{Prop2-2}
Let $\calM$ be a model category such that idempotents split in the homotopy category $\Ho(\calM)$. If $\calD$ is a weakly reflective full subcategory of $\Ho(\calM)$ closed under homotopy equalizers and retracts, then $\calD$ is reflective.
\end{proposition}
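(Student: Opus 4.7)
The plan is to follow the strategy of \cite[Theorem~2.2]{CGR}, turning the weak reflection of an arbitrary $X\in\Ho(\calM)$ into a genuine one by extracting an idempotent and splitting it. Fix $X$ and choose, by weak reflectivity, a weak reflection $r\colon X\to X^*$ onto $\calD$. The first step is to encode the failure of uniqueness of $r$ through the set
\[
\calF=\{u\colon X^*\to X^*\mid u\circ r=r\},
\]
which is small because $\Ho(\calM)$ is locally small and contains $\mathrm{id}_{X^*}$.

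The central construction is a joint homotopy equalizer performed inside~$\calD$. Since $\calD$ is weakly reflective and closed under retracts, it is closed under products by \cite[Remark~4.5(3)]{AR}; combined with the hypothesis that $\calD$ is closed under homotopy equalizers, this lets me form a map $e\colon E\to X^*$ with $E\in\calD$ that is a joint (weak) equalizer of all pairs $(u,\mathrm{id}_{X^*})$ with $u\in\calF$. Because $u\circ r=r$ for every $u\in\calF$, the map $r$ is equalized by each such pair, so it factors as $r=e\circ r'$ for some $r'\colon X\to E$. Applying the weak reflectivity of $r$ to $r'$ produces $s\colon X^*\to E$ with $s\circ r=r'$; then $(e\circ s)\circ r=r$, so $e\circ s\in\calF$, and the equalizer property of $e$ forces $e\circ s\circ e=e$. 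Consequently $(e\circ s)^2=e\circ s$, and $e\circ s$ is an idempotent endomorphism of~$X^*$ in $\Ho(\calM)$.

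By hypothesis idempotents split, so I can write $e\circ s=\iota\circ\pi$ with $\pi\circ\iota=\mathrm{id}_Y$; the object $Y$ is a retract of $X^*$ and therefore lies in~$\calD$. My candidate reflection is $\ell=\pi\circ r\colon X\to Y$. Existence of a lift is straightforward: for any $g\colon X\to D$ with $D\in\calD$, the weak reflectivity of $r$ gives $h\colon X^*\to D$ with $h\circ r=g$, and then $h\circ\iota\colon Y\to D$ satisfies $(h\circ\iota)\circ\ell=h\circ(e\circ s)\circ r=h\circ e\circ r'=h\circ r=g$.

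The main obstacle, and the most delicate part, is uniqueness of the lift. Given $h_1,h_2\colon Y\to D$ with $h_1\circ\ell=h_2\circ\ell$, I set $k_i=h_i\circ\pi$; these satisfy $k_1\circ r=k_2\circ r$ and $k_i\circ(e\circ s)=k_i$, since $\pi\circ\iota\circ\pi=\pi$. I plan to exploit the closure of $\calD$ under homotopy equalizers a second time: form $\epsilon\colon F\to X^*$, a homotopy equalizer of $(k_1,k_2)$, with $F\in\calD$. Then $r$ factors through $\epsilon$, and weak reflectivity of $r$ produces $t\colon X^*\to F$ with $\epsilon\circ t\in\calF$. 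The equalizer property of $e$ then yields $\epsilon\circ t\circ e=e$, so $e$ itself factors through~$\epsilon$, and hence $k_1\circ e=k_2\circ e$. Post-composing with $s$ and invoking $k_i\circ(e\circ s)=k_i$ gives $k_1=k_2$, whence $h_1=h_1\circ\pi\circ\iota=h_2\circ\pi\circ\iota=h_2$. This establishes that $\ell$ is a reflection of $X$ onto $\calD$, completing the proof.
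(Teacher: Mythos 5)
Your argument is correct and follows essentially the same route as the paper's proof, which is the strategy of \cite[Theorem~2.2]{CGR}: form a weak equalizer in $\calD$ of a set of endomorphisms coequalized by the weak reflection, extract an idempotent from it, and split that idempotent using the hypothesis on $\Ho(\calM)$. The only notable variation is that you equalize just the pairs $(u,\mathrm{id})$ with $u\circ r=r$ rather than all pairs $(\alpha,\beta)$ with $\alpha\circ r\simeq\beta\circ r$, which is why you need the auxiliary second equalizer $F$ to verify uniqueness of the factorization --- a verification the paper delegates to \cite{CGR}, where the larger set of pairs makes it more immediate.
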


\begin{proof}
Let $X$ be any object of $\calM$ and assume it cofibrant without loss of generality. 
Let $r\colon X\to D$ be a weak reflection of $X$ onto~$\calD$ in~$\Ho(\calM)$, and assume that $D$ is fibrant and cofibrant.

Consider the set $\calK$ of all pairs $(\alpha_k,\beta_k)$ of morphisms in $\calM$ from $D$ to itself such that $\alpha_k\circ r\simeq\beta_k\circ r$. Let $u\colon E\to D$ be a homotopy equalizer in $\calM$ of the two morphisms $D\to\prod_{k}D$ given by $\prod_{k}\alpha_k$ and $\prod_{k}\beta_k$ respectively, with $E$ fibrant and cofibrant. Then $E\in\calD$ by assumption. 

Since $u\colon E\to D$ is a homotopy equalizer, there is a morphism $s\colon X\to E$ such that $u\circ s\simeq r$. As $E\in\calD$ and $r$ is a weak reflection of $X$ onto~$\calD$, there is a morphism $t\colon D\to E$ such that $t\circ r\simeq s$. Then the pair $(u\circ t,{\rm id})$ is in~$\calK$. Consequently, $u\circ t\circ u\simeq u$, and this implies that $t\circ u$ is idempotent in $\Ho(\calM)$. Hence $t\circ u$ splits, so there are morphisms $t'\colon E\to Z$ and $u'\colon Z\to E$ with $Z$ fibrant and cofibrant such that $u'\circ t'\simeq t\circ u$ and $t'\circ u'\simeq {\rm id}$. It then follows that $Z\in\calD$ since $Z$ is a retract of~$E$.
Then the morphism $t'\circ s$ from $X$ to $Z$ is a reflection of $X$ onto $\calD$ in $\Ho(\calM)$, as shown in detail within the proof of \cite[Theorem~2.2]{CGR}. 
\end{proof}

\begin{corollary}
\label{Cor2-3}
Suppose that the semi-weak Vop\v{e}nka principle holds. Let $\calM$ be a model category whose underlying category is locally presentable and such that idempotents split in the homotopy category $\Ho(\calM)$. Then $\calS^\perp$ is reflective in $\Ho(\calM)$ for every class $\calS$ of morphisms in~$\calM$.
\end{corollary}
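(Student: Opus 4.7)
The strategy is to combine Proposition~\ref{Prop2-1}, which supplies weak reflections at the point-set level in~$\calM$, with Proposition~\ref{Prop2-2}, which promotes weak reflections onto a suitable class to honest reflections in~$\Ho(\calM)$. Fix a class of morphisms $\calS$ in~$\calM$; the goal is to show $\calS^\perp$ is reflective in $\Ho(\calM)$.

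First, I would introduce the full subcategory $\calD\subseteq\calM$ consisting of those fibrant objects that belong to~$\calS^\perp$. Since products of fibrant objects in $\calM$ are fibrant and coincide with homotopy products, and since $\calS^\perp$ is closed under homotopy products (it is closed under homotopy limits by Subsection~\ref{spectra-orghogonality}), the subcategory $\calD$ is closed under products in the locally presentable category~$\calM$. By Proposition~\ref{Prop2-1}, applied under the hypothesis of SWVP, the subcategory $\calD$ is weakly reflective in~$\calM$.

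Next, I would transfer weak reflections from $\calM$ to~$\Ho(\calM)$. Let $X$ be any object of~$\calM$, which may be assumed to be cofibrant, and pick a weak reflection $r\colon X\to X^*$ onto $\calD$ in~$\calM$. Given any morphism $g\colon X\to D$ in $\Ho(\calM)$ with $D\in\calS^\perp$, a fibrant replacement $RD$ lies in~$\calD$, and since $X$ is cofibrant and $RD$ is fibrant, $g$ is represented by a genuine morphism $\tilde g\colon X\to RD$ in~$\calM$. The weak reflection property in $\calM$ yields $h\colon X^*\to RD$ with $h\circ r=\tilde g$, and hence $[h]\circ[r]=g$ in~$\Ho(\calM)$. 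Thus $\calS^\perp$ is weakly reflective in~$\Ho(\calM)$.

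Finally, since $\calS^\perp$ is closed under homotopy equalizers (being closed under homotopy limits) and under homotopy retracts in $\Ho(\calM)$, as recalled in Subsection~\ref{spectra-orghogonality}, and since idempotents split in $\Ho(\calM)$ by assumption, Proposition~\ref{Prop2-2} applies and yields that $\calS^\perp$ is reflective in~$\Ho(\calM)$. The main obstacle in this argument is the third step: one must bridge between the strict factorization property in~$\calM$ supplied by SWVP and the homotopy-invariant factorization property in~$\Ho(\calM)$ that Proposition~\ref{Prop2-2} requires. Restricting $\calD$ to fibrant objects, together with a cofibrant replacement of the source, is precisely what makes this translation go through.
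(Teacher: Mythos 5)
Your proof is correct and follows essentially the same route as the paper: apply Proposition~\ref{Prop2-1} to get a weak reflection in $\calM$, pass to $\Ho(\calM)$, and upgrade via Proposition~\ref{Prop2-2}. Your restriction of $\calD$ to the \emph{fibrant} objects of $\calS^\perp$, together with the cofibrant replacement of the source, is a careful (and welcome) elaboration of the step the paper dispatches with ``consequently,'' since it is products of fibrant objects that compute homotopy products and strict morphisms into fibrant targets that represent homotopy classes.
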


\begin{proof}
Since $\calS^\perp$ is closed under products, Proposition~\ref{Prop2-1} tells us that $\calS^\perp$ is weakly reflective in~$\calM$.
Consequently, $\calS^\perp$ is also weakly reflective in $\Ho(\calM)$, and Proposition~\ref{Prop2-2} then implies that $\calS^\perp$ is reflective.
\end{proof}

For the validity of Corollary~\ref{Cor2-3}, it is not necessary to assume that $\calM$ be cofibrantly generated nor left proper ---these are standard assumptions on a model category for the purpose of constructing $\calS$\nobreakdash-localizations by means of the small object argument when $\calS$ is a (small) set.

On the other hand, $\calS$-localizations obtained using the small object argument are functorial on~$\calM$, while Corollary~\ref{Cor2-3} yields a reflector on~$\Ho(\calM)$, which, a priori, need not lift to an endofunctor of~$\calM$. 
We continue this discussion in Section~\ref{functoriality}.

\subsection{Spectra and simplicial sets}
\label{spectra-simplicial-sets}

The following is a special case of Corollary~\ref{Cor2-3}.

\begin{theorem}
\label{Thm2-4}
If the semi-weak Vop\v{e}nka principle holds, then $\calS$-localization exists in the homotopy category of spectra for every class of maps~$\calS$. 
\end{theorem}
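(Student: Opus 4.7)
The plan is to derive Theorem~\ref{Thm2-4} as a direct application of Corollary~\ref{Cor2-3}, specialized to $\Spectra$. Hence the task reduces to verifying the three hypotheses of that corollary: that $\Spectra$ is a model category, that its underlying category is locally presentable, and that idempotents split in $\Ho(\Spectra)$. The first two are built into the definition of $\Spectra$ adopted in Subsection~\ref{spectra-orghogonality}: by convention, $\Spectra$ is a simplicial model category whose underlying category is locally presentable (as in the Bousfield--Friedlander model or symmetric spectra). So the only point requiring comment is the splitting of idempotents in the classical stable homotopy category.

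For that, I would appeal to the well-known fact that any triangulated category with countable coproducts has split idempotents, proved by Neeman. Concretely, given an idempotent endomorphism $e\colon X\to X$ in $\Ho(\Spectra)$, one takes the homotopy colimit $Y$ of the telescope
\[
X\stackrel{e}{\longrightarrow} X\stackrel{e}{\longrightarrow} X\stackrel{e}{\longrightarrow}\cdots,
\]
and the structure maps yield a factorization $X\to Y\to X$ of $e$ together with a retraction exhibiting $Y$ as the image of~$e$. Since $\Ho(\Spectra)$ admits countable coproducts, this construction goes through and produces the desired splitting.

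Granting these three hypotheses, Corollary~\ref{Cor2-3} yields, under SWVP, that $\calS^\perp$ is reflective in $\Ho(\Spectra)$ for every class $\calS$ of morphisms in $\Spectra$. Finally, Corollary~\ref{Cor1-3} upgrades any reflection of an object $X$ onto $\calS^\perp$ in $\Ho(\Spectra)$ to an $\calS$-localization in the sense of Subsection~\ref{model-orthogonality}, which completes the argument. The main obstacle is essentially absent: everything reduces to invoking Corollary~\ref{Cor2-3} and a standard property of triangulated categories with countable coproducts, the latter being the only nontrivial ingredient to cite.
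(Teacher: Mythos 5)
Your proposal is correct and follows essentially the same route as the paper: apply Corollary~\ref{Cor2-3} to a locally presentable simplicial model of spectra, with the splitting of idempotents in the stable homotopy category supplied by Neeman's result on triangulated categories with countable (co)products via the telescope construction. The extra appeal to Corollary~\ref{Cor1-3} at the end is harmless but not needed, since the paper's convention already identifies reflectivity of $\calS^\perp$ in $\Ho(\Spectra)$ with the existence of $\calS$-localization there.
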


\begin{proof}
Corollary~\ref{Cor2-3} applies to the category of Bousfield--Friedlander spectra \cite{BF} or to the category of symmetric spectra \cite{HSS}, since they are locally presentable and idempotents split in their homotopy category. In fact, idempotents split in any triangulated category with countable products~\cite{Neeman}.
\end{proof}

As for simplicial sets, it is neither true that idempotents split in the homotopy category $\Ho(\sSet)$ nor in the pointed homotopy category $\Ho(\sSet_*)$, but they do for pointed \emph{connected} simplicial sets~\cite{FH}. The proof of Proposition~\ref{Prop2-2} can be amended to deal with this circumstance. We provide details in order to highlight the necessary changes.

Let $\calS$ be a class of maps between simplicial sets.
As shown in \cite{Tai}, if some map in $\calS$ does not induce a bijection on $\pi_0$ then $\calS$-local spaces are contractible. Hence we may assume that all maps in $\calS$ induce bijections of connected components. 
If $f\colon A\to B$ is any such map and we denote by $\{f_{i}\colon A_{i}\to B_{i}\}_{i\in I}$ the collection of its restrictions to connected components of $A$ and~$B$, then a space $X$ is $f$-local if and only if $X$ is $f_{i}$-local for all~$i$.
Therefore we may also assume that the class $\calS$ consists of maps between connected simplicial sets, by replacing each map in the given class by the collection of its restrictions to connected components. 

We also use the fact that, if we choose basepoints so that each map in $\calS$ is basepoint-preserving, then the class of connected $\calS$-local spaces in the pointed category $\Ho(\sSet_*)$ is the same as the corresponding class in the unpointed category $\Ho(\sSet)$ by forgetting the basepoints. This follows, as observed in \cite[A.1]{Farjoun}, from the fact that for all pointed connected simplicial sets $A$ and $Y$ there is a fibration
\[
{\rm map}_*(A,Y)\longrightarrow {\rm map}(A,Y)\longrightarrow Y
\]
where $\map_*(A,Y)$ is the hom simplicial set in $\sSet_*$ and the right-hand arrow is evaluation at the basepoint of~$A$.

Consequently, for the proof of the next result we choose to work in the pointed category $\sSet_*$, and  there is no loss of generality if we restrict ourselves to connected spaces, since an $\calS$-localization of an arbitrary space is the disjoint union of the $\calS$-localizations of its connected components after choosing arbitrary basepoints in them.

\begin{theorem}
\label{Thm2-5}
If the semi-weak Vop\v{e}nka principle holds, then $\calS$-local\-iza\-tion exists in the homotopy categories of pointed or unpointed simplicial sets for every class of maps~$\calS$. 
\end{theorem}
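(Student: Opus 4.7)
The plan is to adapt the proof of Corollary~\ref{Cor2-3} to the category of pointed connected simplicial sets, where idempotents split in the homotopy category by~\cite{FH}. Using the reductions made in the paragraphs preceding the statement, we may assume that $\calS$ consists of basepoint-preserving maps between connected pointed simplicial sets, and it suffices to construct the $\calS$-localization of an arbitrary pointed connected simplicial set $X$, since the $\calS$-localization of a general simplicial set is assembled as the disjoint union of the $\calS$-localizations of its connected components.

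First, applying Proposition~\ref{Prop2-1} to the locally presentable category $\sSet_*$ yields a weak reflection of $X$ onto $\calS^\perp$, which, after fibrant replacement, becomes a weak reflection $X\to X^*$ in $\Ho(\sSet_*)$ onto the class of Kan $\calS$-local pointed simplicial sets. Because every map in $\calS$ has connected domain and codomain, the basepoint component of any pointed $\calS$-local space is again $\calS$-local, and the weak reflection $X\to X^*$ factors through the basepoint component $X^*_0$ of $X^*$, since $X$ is connected. This produces a weak reflection $X\to X^*_0$ in $\Ho(\sSet_*)$ onto the full subcategory $\calD$ of pointed connected $\calS$-local simplicial sets.

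Next I would rerun the argument of Proposition~\ref{Prop2-2} with $\calD$ in place of $\calS^\perp$. The only step needing adjustment is the formation of the homotopy equalizer $u\colon E\to D$ of the maps $D\to\prod_k D$ induced by the set $\calK$: the equalizer $E$ need not be connected, but its basepoint component $E_0$ is $\calS$-local by the observation of the previous paragraph, and, since both $X$ and $D$ are connected, the auxiliary maps $s\colon X\to E$ and $t\colon D\to E$ produced in the proof factor through $E_0$. Replacing $E$ by $E_0$ throughout, the remainder of the argument goes through verbatim: the idempotent $t\circ u$ splits in the full subcategory of connected pointed objects of $\Ho(\sSet_*)$ by~\cite{FH}, yielding a reflection of $X$ onto $\calD$ in $\Ho(\sSet_*)$, which by Corollary~\ref{Cor1-3} is an $\calS$-localization of~$X$.

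The unpointed case then follows from the pointed case by choosing basepoints and invoking the fibration $\map_*(A,Y)\to\map(A,Y)\to Y$ recalled before the statement, which identifies the connected $\calS$-local objects in $\Ho(\sSet_*)$ with the corresponding objects in $\Ho(\sSet)$ after forgetting basepoints. The main obstacle is confining the entire construction to connected pointed simplicial sets, where idempotents split; this is achieved by repeatedly passing to the basepoint component of intermediate objects and using the fact that such a passage preserves $\calS$-locality precisely because $\calS$ was reduced at the outset to maps between connected pointed spaces.
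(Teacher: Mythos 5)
Your proposal is correct and follows essentially the same route as the paper's proof: reduce to basepoint-preserving maps between connected spaces, obtain a weak reflection from Proposition~\ref{Prop2-1}, and rerun the argument of Proposition~\ref{Prop2-2} while passing to basepoint components at each stage so that the idempotent lives on a pointed connected space, where it splits by~\cite{FH}. The only cosmetic difference is that you justify the locality of basepoint components via connectivity of the maps in $\calS$, whereas the paper notes that the basepoint component is a retract; both are valid.
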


\begin{proof}
By our previous remarks, it suffices to construct an $\calS$-localization in the pointed category $\sSet_*$ for every pointed \emph{connected} simplicial set~$X$, by assuming that maps in $\calS$ are basepoint-preserving maps between connected simplicial sets.
The proof of this theorem proceeds in the same way as the proof of Proposition~\ref{Prop2-2}. Let $\calD=\calS^{\perp}$. Given any~$X$, by Proposition~\ref{Prop2-1}, there is a weak reflection $r\colon X\to D$ of $X$ onto~$\calD$, since $\sSet_*$ is locally presentable. We may assume that $D$ is fibrant and view $r$ as a weak reflection in the homotopy category $\Ho(\sSet_*)$.
Since $X$ is connected, the image of $r$ is contained in the basepoint component $D_0$ of~$D$, and the codomain restriction $r_0\colon X\to D_0$ is still a weak reflection of $X$ onto~$\calD$ in $\Ho(\sSet_*)$.

Consider the set $\calK$ of all pairs $(\alpha_k,\beta_k)$ of maps from $D_0$ to itself such that $\alpha_k\circ f_0\simeq\beta_k\circ f_0$.
Let $u\colon E\to D_0$ be a homotopy equalizer in $\sSet_*$ of the two maps $D_0\to\prod_{k}D_0$ given by $\prod_{k}\alpha_k$ and $\prod_{k}\beta_k$ respectively, with $E$ fibrant. Then $E\in\calD$ since $\calD$ is closed under homotopy limits. Pick the basepoint component $E_0$ of~$E$, which is also in $\calD$ because it is a retract of~$E$.

Since $u\colon E\to D_0$ is a homotopy equalizer, there is a map $s\colon X\to E$ such that $u\circ s\simeq r_0$, and $s$ factors through a map $s_0\colon X\to E_0$ since $X$ is connected; that is, $s=i\circ s_0$, where $i\colon E_0\to E$ is the inclusion.
As $E_0\in\calD$ and $r_0$ is a weak reflection of $X$ onto~$\calD$, there is a map $t\colon D_0\to E_0$ such that $t\circ r_0\simeq s_0$. Then 
$(u\circ i\circ t,{\rm id})$ is in~$\calK$ and, consequently, $u\circ i\circ t\circ u\simeq u$.
This implies that $t\circ u\circ i$ is idempotent in $\Ho(\sSet_*)$. 

Now we use the fact that, according to \cite{FH}, idempotents split for pointed connected simplicial sets. Thus, since $E_0$ is connected, $t\circ u\circ i$ splits, so there are maps $t'\colon E_0\to Z$ and $u'\colon Z\to E_0$ with $Z$ fibrant such that $u'\circ t'\simeq t\circ u\circ i$ and $t'\circ u'\simeq {\rm id}$. It then follows that $Z\in\calD$ and $Z$ is connected, since $Z$ is a retract of~$E_0$.
The argument showing that the map $t'\circ s_0$ from $X$ to $Z$ is a reflection of $X$ onto $\calD$ in $\Ho(\sSet_*)$ is the same as in the proof of 
\cite[Theorem~2.2]{CGR}.
\end{proof}

\subsection{Reflectivity in \boldmath$\infty$-categories}
\label{infinity-categories}

In the higher-categorical context, it is not necessary to impose that idempotents split as in Proposition~\ref{Prop2-2}, since every $\infty$-category $\calC$ with products is idempotent complete. The reason is that, if $e\colon A\to A$ is idempotent, then a splitting $s\colon B\to A$ is an equalizer of $e$ and the identity. However, as observed in \cite[4.4.5.19]{Lurie}, an idempotent in ${\rm h}\calC$ need not lift to an idempotent in $\calC$ in general.

\begin{theorem}
\label{Thm2-6}
Suppose that the semi-weak Vop\v{e}nka principle holds.
If $\calC$ is a presentable $\infty$-category and $\calS$ is a class of morphisms in~$\calC$, then $\calS^\perp$ is reflective in~$\calC$.
\end{theorem}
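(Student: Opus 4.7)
Since $\calC$ is presentable, there exists a combinatorial simplicial model category $\calM$ with $\calC$ equivalent to $\calMo$, and I identify $\calC = \calMo$ via this equivalence. Viewing $\calS$ as a class of morphisms in $\calMo$, Proposition~\ref{Prop1-1} relates the orthogonal class $\calS^\perp$ in $\calC$ to $\calS^\perp(\calM)$ in the locally presentable category~$\calM$. Since $\calS^\perp(\calM)$ is closed under products in~$\calM$, Proposition~\ref{Prop2-1} yields weak reflections in~$\calM$. After fibrant-cofibrant replacement, these transfer to weak reflections $r\colon X\to D$ in $\calC$ for every object $X\in\calC$, in the sense that $D\in\calS^\perp$ and every morphism $X\to E$ with $E\in\calS^\perp$ factors through $r$ up to homotopy.

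The plan is then to mimic the argument of Proposition~\ref{Prop2-2} within the $\infty$-category~$\calC$. Given $r\colon X\to D$, I would consider the set $\calK$ of all pairs $(\alpha_k,\beta_k)$ of endomorphisms of $D$ satisfying $\alpha_k\circ r\simeq\beta_k\circ r$, and form the $\infty$-categorical equalizer $u\colon E\to D$ of the induced pair of maps $D\rightrightarrows\prod_{k}D$. Since $\calS^\perp$ is closed under limits in~$\calC$, we have $E\in\calS^\perp$. The weak reflectivity of $r$ combined with the universal property of $u$ then produces morphisms $s\colon X\to E$ and $t\colon D\to E$ with $u\circ s\simeq r$ and $t\circ r\simeq s$, from which $u\circ t\circ u\simeq u$ follows exactly as in Proposition~\ref{Prop2-2}. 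Hence $t\circ u\colon E\to E$ becomes a homotopy idempotent on~$E$.

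The main obstacle is splitting $t\circ u$ coherently in~$\calC$. In contrast to Proposition~\ref{Prop2-2}, we need not assume that idempotents split in ${\rm h}\calC$, since $\calC$ has all limits and is therefore idempotent complete as an $\infty$-category. The delicate point, as emphasized in the paragraph preceding the theorem, is that a homotopy idempotent in ${\rm h}\calC$ need not a priori lift to a coherent idempotent in~$\calC$; the specific structure provided by the equalizer construction, however, gives the extra coherence needed to upgrade $t\circ u$ to a coherent idempotent, by a direct $\infty$-categorical argument along the lines of~\cite{LoMonaco}. Once split, this yields $Z\in\calS^\perp$ together with a morphism $X\to Z$ that is a reflection onto $\calS^\perp$ in the homotopy category~${\rm h}\calC$. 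Since $\calC$ is enriched in Kan complexes and cotensored over simplicial sets, Proposition~\ref{Prop1-2} promotes this reflection to an actual $\calS$-localization in~$\calC$, establishing the reflectivity of~$\calS^\perp$.
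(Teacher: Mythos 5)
Your proposal follows the paper's proof essentially step for step: present $\calC$ as $\calMo$ for a combinatorial simplicial model category, obtain a weak reflection onto $\calS^\perp(\calM)$ from Proposition~\ref{Prop2-1} and transfer it to $\calMo$, then run the equalizer-and-idempotent argument of Proposition~\ref{Prop2-2} inside the $\infty$-category, where completeness supplies the splitting. The one delicate point you flag --- upgrading the homotopy idempotent $t\circ u$ to a coherent idempotent before splitting it --- is treated at the same level of detail in the paper, which likewise appeals to the idempotent completeness of $\calMo$ and records the cleaner alternative (a weakly initial object of the complete coslice $\calS^\perp(\calMo)_{X/}$ is initial, by \cite[Proposition~2.3.2]{Raptis}) only as a remark following the proof.
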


\begin{proof}
According to \cite[A.3.7.6]{Lurie}, every presentable $\infty$-category $\calC$ is equivalent to $\calMo$ for some combinatorial simplicial model category~$\calM$. 
Moreover, $\calMo$ is complete and cocomplete, with limits and colimits inherited from homotopy limits and homotopy colimits in~$\calM$; see~\cite[4.2.4.8]{Lurie}.

We keep denoting by $\calS$ the corresponding class of morphisms in~$\calMo$. Since $\calS^\perp(\calM)$ is closed under products in~$\calM$, it follows from Proposition~\ref{Prop2-1} that, given $X\in\calMo$, there is a weak reflection $X\to X^*$ onto $\calS^{\perp}(\calM)$. Then $RQX^*$ is in $\calS^\perp(\calMo)$ and, since $X$ is cofibrant, there is a morphism $X\to RQX^*$ which is a weak reflection of $X$ onto $\calS^{\perp}(\calMo)$ within~$\calMo$.

Now a weak reflection can be turned into a reflection with the same argument as in Proposition~\ref{Prop2-2}, using the fact that $\calMo$ is idempotent complete and has equalizers.
Specifically, if $r\colon X\to D$ is a weak reflection of $X$ onto $\calS^\perp(\calMo)$, let $u\colon E\to D$ be an equalizer of $\prod_k\alpha_k$ and $\prod_k\beta_k$ where $\{(\alpha_k,\beta_k)\}$ are all pairs of morphisms $D\to D$ such that $\alpha_k\circ f\simeq\beta_k\circ f$. Then there is a morphism $s\colon X\to E$ with $u\circ s\simeq r$ and a morphism $t\colon D\to E$ such that $t\circ r\simeq s$. It follows that $t\circ u$ is idempotent and hence, if $t'\colon E\to Z$ is a splitting of $t\circ u$, then $t'\circ s$ is a reflection from $X$ onto $\calS^{\perp}(\calMo)$.
\end{proof}

This result is also contained in~\cite{LoMonaco}, where the following alternative argument was used. If there is a weak reflection of $X$ onto $\calS^\perp(\calMo)$, then the coslice $\infty$-category $\calS^{\perp}(\calMo)_{X/}$ has a weakly initial object.
Since $\calMo$ is complete and $\calS^\perp(\calMo)$ is closed under limits, $\calS^\perp(\calMo)_{X/}$ is also complete. Since it has a weakly initial object, it also has an initial object, as shown in~\cite[Proposition~2.3.2]{Raptis}. This initial object is precisely a reflection of $X$ onto $\calS^\perp(\calMo)$; cf.\;\cite[5.2.7.7]{Lurie}.

\section{Reverse implications}
\label{reverse_implications}

In this section, we address the converse of Theorems \ref{Thm2-4} and~\ref{Thm2-5} using results by Prze\'zdziecki, who proved that, if the weak Vop\v{e}nka principle is false, then there exist non-reflective orthogonality classes of groups \cite{Prz1} and non-reflective orthogonality classes of abelian groups \cite{Prz2}.

In the argument that follows, we use, as in~\cite{Prz1}, the fact that sending every group $G$ to an Eilenberg--Mac Lane space $K(G,1)$ is a full embedding of the category of groups into the homotopy category of pointed simplicial sets, since for all groups $G$ and $H$ there is a natural bijective correspondence between the set of pointed homotopy classes of maps $[K(G,1),K(H,1)]$ and the set of group homomorphisms ${\rm Hom}(G,H)$. 

Moreover, this full embedding preserves and reflects orthogonality, since a group homomorphism $\varphi\colon P\to Q$ is orthogonal to a group $G$ if and only if the map $K(P,1)\to K(Q,1)$ induced by $\varphi$ is orthogonal to a $K(G,1)$, since the space ${\rm map}_*(K(P,1),K(G,1))$ is discrete and its set of connected components is in bijective correspondence with ${\rm Hom}(P,G)$, and similarly with~$Q$.

\begin{theorem}
\label{Thm3-1}
The statement that $\calS$-localization exists in the homotopy category of pointed simplicial sets for every class of maps $\calS$ is equivalent to the weak Vop\v{e}nka principle.
\end{theorem}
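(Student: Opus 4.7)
The plan is to prove the two implications separately. The forward direction is nearly immediate: by Wilson's theorem that WVP is equivalent to SWVP~\cite{Wilson1}, Theorem~\ref{Thm2-5} already yields that WVP implies the existence of $\calS$-localization in $\Ho(\sSet_*)$ for every class of maps~$\calS$, so no further work is required here.

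For the converse, I would argue by contrapositive, using Prze\'zdziecki's result~\cite{Prz1}: if WVP fails, there is a class $\calS$ of homomorphisms in the category of groups such that $\calS^\perp$ is not reflective, and hence some group $G$ admits no reflection onto~$\calS^\perp$. The idea is to transport this obstruction into $\Ho(\sSet_*)$ via the full embedding $H\mapsto K(H,1)$, which (as reviewed in the paragraphs preceding the statement) both preserves and reflects orthogonality because the bijection $[K(P,1),K(H,1)]\cong\mathrm{Hom}(P,H)$ together with the discreteness of $\mathrm{map}_*(K(P,1),K(H,1))$ identify pointed homotopy classes with homomorphisms.

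Concretely, I would consider the class of maps in pointed simplicial sets
\[
\tilde{\calS}=\{K(\varphi,1)\mid \varphi\in\calS\}\cup\{S^n\to\ast\mid n\ge 2\}.
\]
The second family forces every $\tilde{\calS}$-local pointed connected simplicial set to have vanishing higher homotopy groups, hence to be weakly equivalent to some $K(H,1)$; the first family, applied map by map, then forces $H\in\calS^\perp$, and conversely every $K(H,1)$ with $H\in\calS^\perp$ is $\tilde{\calS}$-local. Assuming $\tilde{\calS}$-localization exists, the localization $\ell\colon K(G,1)\to L$ satisfies $L\simeq K(H,1)$ for some $H\in\calS^\perp$, and the homomorphism $G\to H$ corresponding to $\ell$ under the bijection above should be a reflection of $G$ onto $\calS^\perp$ in groups, contradicting the choice of~$G$.

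The main technical point I expect to require care is checking that the induced map $G\to H$ in groups is a \emph{strict} reflection, not merely a weak one: this means that for every $H'\in\calS^\perp$, the bijection $[K(G,1),K(H',1)]\cong\mathrm{Hom}(G,H')$ carries the orthogonality of $\ell$ with respect to $K(H',1)$ to the bijectivity of precomposition with $G\to H$ on $\mathrm{Hom}(-,H')$. This will follow from the fact that the embedding $K(-,1)$ reflects orthogonality, but it deserves a careful statement, since the chain of identifications relies on connectedness of $K(G,1)$ and on the discreteness of the relevant pointed mapping spaces.
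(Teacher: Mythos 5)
Your proposal is correct and follows essentially the same route as the paper: the forward direction via Theorem~\ref{Thm2-5} together with Wilson's equivalence of SWVP and WVP, and the converse by transporting Prze\'zdziecki's non-reflective orthogonality class of groups into $\Ho(\sSet_*)$ through the embedding $K(-,1)$, using maps $S^n\to\ast$ to force connected local objects to be aspherical (the paper packages this by localizing at $\calK^\perp$ for the class $\calK$ of Eilenberg--Mac\,Lane spaces rather than at an explicitly listed class of maps, but the content is the same). The one detail you should add is that $LK(G,1)$ is connected --- localizations of connected spaces are connected, by Tai's result cited in the paper --- since your asphericity argument only controls the basepoint component.
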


\begin{proof}
In Theorem~\ref{Thm2-5}, we have shown that the semi-weak Vop\v{e}nka principle implies the existence of arbitrary $\calS$-localizations of pointed simplicial sets. Since SWVP is equivalent to WVP by~\cite{Wilson1}, it suffices to prove that, under the negation of WVP, there is a class of maps $\calS$ in $\sSet_*$ for which $\calS$-localization does not exist.

Thus suppose that WVP is false. Then, according to~\cite[Proposition~8.7]{Prz1}, there exists a non-reflective orthogonality class $\calG$ of groups. The fact that $\calG$ is an orthogonality class implies that $\calG=\calG^{\perp\perp}$.

Let $\calK$ be the class of Eilenberg--Mac\,Lane spaces $K(G,1)$ with $G\in\calG$. We prove that the class $\calK^{\perp\perp}$ is not reflective in $\Ho(\sSet_*)$. Suppose the contrary, and let $L$ be a reflector. We first show that every \emph{connected} simplicial set in $\calK^{\perp\perp}$ is in~$\calK$. Note that the map $S^2\to *$ is in $\calK^{\perp}$, since ${\rm map}_*(S^2,K(G,1))=\Omega^2K(G,1)$ is contractible for every group~$G$. If $X$ is a connected simplicial set in $\calK^{\perp\perp}$, then $X$ is orthogonal to $S^2\to *$ and hence ${\rm map}_*(S^2,X)$ is contractible. This implies that $\pi_n(X)=0$ for $n\ge 2$, so $X$ is indeed an Eilenberg--Mac Lane space. There remains to show that $\pi_1(X)\in\calG$, which is equivalent to the statement that $\pi_1(X)\in\calG^{\perp\perp}$. Let $\varphi\colon P\to Q$ be any homomorphism in~$\calG^\perp$. Then the induced map $K(P,1)\to K(Q,1)$ is in $\calK^\perp$. Since $X\in\calK^{\perp\perp}$, the map $K(P,1)\to K(Q,1)$ is orthogonal to $X$, and this implies that $\pi_1(X)$ is orthogonal to~$\varphi$, as needed. 

Let $G$ be any group, and let $\ell\colon K(G,1)\to LK(G,1)$ be its localization onto~$\calK^{\perp\perp}$. Here $LK(G,1)$ is connected since every localization of a connected space is connected~\cite{Tai}.
Let us consider the induced group homomorphism $\ell_*\colon G\to H$ where $H=\pi_1(LK(G,1))$. We have that $LK(G,1)=K(H,1)$ with $H\in\calG$, since we have shown that every connected space in $\calK^{\perp\perp}$ is in~$\calK$.
If $J$ is any group in~$\calG$, then the corresponding $K(J,1)$ is in $\calK$ and hence it is orthogonal to~$\ell$. This means precisely that $J$ is orthogonal to $\ell_*$ and therefore $\ell_*$ is a reflection of $G$ onto~$\calG$. Hence the class $\calG$ is reflective, which is a contradiction. 
\end{proof}

\begin{corollary}
\label{Cor3-2}
The statement that $\calS$-localization exists in every presentable $\infty$-category for every class of morphisms $\calS$ is equivalent to the weak Vop\v{e}nka principle.
\end{corollary}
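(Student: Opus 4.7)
The plan is to reduce both directions of the equivalence to results already established in the paper, with Theorem~\ref{Thm3-1} carrying most of the weight on the reverse side.

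For the direction WVP $\Rightarrow$ existence of $\calS$-localizations in every presentable $\infty$-category, I would simply combine Theorem~\ref{Thm2-6} with Wilson's equivalence WVP $\Leftrightarrow$ SWVP from \cite{Wilson1}. Theorem~\ref{Thm2-6} asserts that, under SWVP, the class $\calS^\perp$ is reflective in any presentable $\infty$-category, and by the discussion in Subsection~\ref{infinity-orthogonality} a reflection onto $\calS^\perp$ is precisely the datum of an $\calS$-localization for each object. No additional argument is required.

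For the converse, the idea is to feed the hypothesis into the underlying $\infty$-category $\calC=(\sSet_*)^\circ$ of the simplicial model category of pointed simplicial sets, which is presentable because $\sSet_*$ is a combinatorial simplicial model category (see the discussion before Theorem~\ref{Thm2-6}). Given any class $\calS$ of maps in $\sSet_*$, viewed as a class of morphisms in~$\calC$, the hypothesis produces an $\calS$-localization $\ell\colon X\to LX$ in $\calC$ for every object~$X$. By Proposition~\ref{Prop1-2}, each such $\ell$ is then a reflection of $X$ onto $\calS^\perp$ in the homotopy category ${\rm h}\calC\simeq\Ho(\sSet_*)$. Therefore $\calS$-localization exists in $\Ho(\sSet_*)$ for every class~$\calS$, and Theorem~\ref{Thm3-1} forces WVP to hold.

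The only bookkeeping point I would want to verify with care is the compatibility between the simplicial-model orthogonal complement appearing in Theorem~\ref{Thm3-1} and the $\infty$-categorical orthogonal complement in~$\calC$, so that the reflection produced in $\Ho(\sSet_*)$ really lands in the class the theorem demands. This is exactly the content of Proposition~\ref{Prop1-1} together with Subsection~\ref{comparison}: after fibrant-cofibrant replacement, the two notions of orthogonality agree up to weak equivalence. With that identification in hand, no further ingredients are needed, and I do not expect any genuine obstacle beyond this routine matching of definitions.
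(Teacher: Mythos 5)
Your proposal is correct and follows essentially the same route as the paper: the forward direction is Theorem~\ref{Thm2-6} plus Wilson's equivalence of WVP and SWVP, and the converse transfers the hypothesis to $(\sSet_*)^\circ$ and invokes Theorem~\ref{Thm3-1}, which is exactly the paper's argument stated in contrapositive form. The compatibility point you flag (Proposition~\ref{Prop1-1} and Subsection~\ref{comparison}) is indeed the only bookkeeping needed, and it goes through as you describe.
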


\begin{proof}
One implication has been shown in Theorem~\ref{Thm2-6} by means of SWVP. Conversely, if WVP is false, then, by Theorem~\ref{Thm3-1}, there is a class $\calS$ of maps in $\sSet_*$ for which $\calS^\perp$ is not reflective in $\Ho(\sSet_*)$. Therefore there is also a class of maps between Kan complexes which is not reflective in the $\infty$-categorical sense.
\end{proof}

The stable analogue is similar. In the next result, we use the fact that sending every abelian group $A$ to an Eilenberg--Mac Lane spectrum $HA$ with a single nonzero homotopy group isomorphic to $A$ in dimension $0$ is a full embedding of the category of abelian groups into the homotopy category of spectra. Indeed, the function spectrum $F(HA,HB)$ has two nonzero homotopy groups in general, namely 
\begin{align*}
\pi_0(F(HA,HB)) & \cong{\rm Hom}(A,B) \\[0.1cm] 
\pi_{-1}(F(HA,HB)) & \cong[HA,\Sigma HB]\cong{\rm Ext}(A,B).
\end{align*}
Therefore the connective cover $F^c(HA,HB)$ is an Eilenberg--Mac Lane spectrum whose $\pi_0$ is isomorphic to ${\rm Hom}(A,B)$, and this implies, as in the case of groups, that the full embedding preserves and reflects orthogonality. Thus, a homomorphism of abelian groups $\varphi\colon A\to B$ is orthogonal to an abelian group $C$ if and only if the induced map $HA\to HB$ is orthogonal to~$HC$.

\begin{theorem}
\label{Thm3-3}
The statement that $\calS$-localization exists in the homotopy category of spectra for every class of maps $\calS$ is equivalent to the weak Vop\v{e}nka principle.
\end{theorem}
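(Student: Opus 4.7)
One direction is immediate from Theorem~\ref{Thm2-4}, since SWVP is equivalent to WVP by~\cite{Wilson1}. For the converse, the plan is to argue the contrapositive: assuming WVP fails, we produce a class of maps $\calS$ in $\Ho(\Spectra)$ for which $\calS$-localization does not exist, closely following the blueprint of Theorem~\ref{Thm3-1} but with Prze\'zdziecki's result for abelian groups~\cite{Prz2} replacing his result for groups. We would take an orthogonality class $\calA=\calA^{\perp\perp}$ of abelian groups which is not reflective, set $\calK=\{HA\mid A\in\calA\}\subseteq\Ho(\Spectra)$, and show that $\calK^{\perp\perp}$ is not reflective in $\Ho(\Spectra)$.

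Suppose for contradiction that a reflector $L$ onto $\calK^{\perp\perp}$ exists, and let $\ell\colon HA\to LHA$ be the reflection of $HA$ for an arbitrary abelian group~$A$. The first step is to locate the homotopy groups of~$LHA$. The map $S^n\to *$ lies in $\calK^\perp$ whenever $n\ge 1$, since $F^c(S^n,HC)=\tau_{\ge 0}(\Sigma^{-n}HC)\simeq *$ for every $C\in\calA$; orthogonality of $LHA$ to these maps yields $\pi_k(LHA)=0$ for every $k\ge 1$. Next, every homomorphism $\varphi\colon P\to Q$ in $\calA^\perp$ induces a map $\varphi_*\colon HP\to HQ$ in $\calK^\perp$, because the connective cover $F^c(HQ,HC)=H\,{\rm Hom}(Q,C)$ (and analogously for $HP$) identifies the orthogonality condition $\varphi_*\perp HC$ with $\varphi\perp C$ in the category of abelian groups. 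Orthogonality $LHA\perp\varphi_*$ combined with the exact sequence
\[
0\to{\rm Ext}(Q,\pi_1 LHA)\to[HQ,LHA]\to{\rm Hom}(Q,\pi_0 LHA)\to 0
\]
obtained from a free resolution of~$Q$, together with the vanishing $\pi_1 LHA=0$, shows that $\pi_0 LHA$ is orthogonal to every $\varphi\in\calA^\perp$, and hence $\pi_0 LHA\in\calA^{\perp\perp}=\calA$.

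To close the argument, we compare $[LHA,HB]$ with $[HA,HB]={\rm Hom}(A,B)$ for $B\in\calA$, using the reflector bijection $\ell^*\colon[LHA,HB]\to[HA,HB]$ (valid because $HB\in\calK\subseteq\calK^{\perp\perp}$). The universal coefficient short exact sequence for spectra,
\[
0\to{\rm Ext}(\pi_{-1}LHA,B)\to[LHA,HB]\to{\rm Hom}(\pi_0 LHA,B)\to 0,
\]
which is available because $H_*(X;\ZZ)=\pi_*X$ for every spectrum $X$ (by degeneration of the Künneth spectral sequence over~$\ZZ$), exhibits the right-hand map as a surjection. This surjection, composed with precomposition by $\ell_*\colon A\to\pi_0 LHA$, equals $\ell^*$ under the identification $[HA,HB]={\rm Hom}(A,B)$. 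Since $\ell^*$ is a bijection, the composite is a bijection, forcing both ${\rm Ext}(\pi_{-1}LHA,B)=0$ and the bijectivity of ${\rm Hom}(\pi_0 LHA,B)\to{\rm Hom}(A,B)$. Consequently $\ell_*\colon A\to\pi_0 LHA$ is a reflection of $A$ onto~$\calA$ in the category of abelian groups; as $A$ is arbitrary, $\calA$ is reflective, contradicting our assumption.

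The delicate point is the handling of the negative homotopy groups of~$LHA$, which (unlike in the simplicial-set case of Theorem~\ref{Thm3-1}) cannot be annihilated by any map of the form $S^n\to *$ in $\calK^\perp$. The universal coefficient sequence for arbitrary (coconnective) spectra is what allows us to bypass this obstacle and extract a genuine abelian-group reflection from~$\ell$ at the level of~$\pi_0$.
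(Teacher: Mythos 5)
Your overall strategy is the same as the paper's: assume WVP fails, take Prze\'zdziecki's non-reflective orthogonality class $\calA$ of abelian groups, pass to Eilenberg--Mac\,Lane spectra, and derive a contradiction by extracting a reflection of $A$ onto $\calA$ from a hypothetical reflection $\ell\colon HA\to LHA$. The first steps are fine: $\Sigma^nS\to 0$ lies in the orthogonal class for $n\ge 1$, so $\pi_k(LHA)=0$ for $k\ge 1$, and $F^c(HP,HC)\simeq H\,{\rm Hom}(P,C)$ correctly transports orthogonality between abelian groups and their Eilenberg--Mac\,Lane spectra.

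The gap is in the two ``universal coefficient'' sequences you invoke, both of which are false for a general spectrum $LHA$. First, $[HQ,X]$ is not computed by ${\rm Hom}(Q,\pi_0X)$ and ${\rm Ext}(Q,\pi_1X)$ for arbitrary $X$ (already $[H\ZZ,S]\ne\ZZ$); your conclusion $[HQ,LHA]\cong{\rm Hom}(Q,\pi_0LHA)$ happens to be salvageable here, but only via the observation that $HQ$ is connective and $\tau_{\ge 0}LHA\simeq H\pi_0(LHA)$, not via the sequence you wrote. Second, and fatally, the sequence $0\to{\rm Ext}(\pi_{-1}LHA,B)\to[LHA,HB]\to{\rm Hom}(\pi_0LHA,B)\to 0$ does not exist: the genuine universal coefficient sequence for $[X,HB]$ involves the integral \emph{homology} $H_*(X;\ZZ)=\pi_*(X\wedge H\ZZ)$, not $\pi_*X$, and your justification ``$H_*(X;\ZZ)=\pi_*X$ for every spectrum $X$'' is simply wrong (it fails for the sphere spectrum). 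Since $H_0(LHA;\ZZ)$ agrees with $\pi_0(LHA)$ only when $LHA$ is bounded below, your final comparison of $[LHA,HB]$ with ${\rm Hom}(\pi_0LHA,B)$ collapses exactly at the point you yourself flag as delicate: the possible negative homotopy of $LHA$ has not been dealt with, only hidden inside an invalid exact sequence. The paper closes this gap by a genuinely different input: since $\ell$ is in particular an $\{\ell\}$-localization of the $H\ZZ$-module $HA$, \cite[Theorem~5.6]{CG} forces $LHA\simeq HB\times\Sigma HC$, so $LHA$ is connective and a product of Eilenberg--Mac\,Lane spectra, after which the identification of $\ell_*$ as a reflection is elementary. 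You need either this structural result or some other argument controlling $\pi_{<0}(LHA)$ (or at least $H_{\le 0}(LHA;\ZZ)$) for your last paragraph to go through.
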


\begin{proof}
One implication is given by Theorem~\ref{Thm2-4}. To prove the converse, suppose that WVP does not hold. Then there is a non-reflective orthogonality class $\calA$ of abelian groups by~\cite[Proposition~6.8]{Prz2}. Thus, $\calA=\calA^{\perp\perp}$.

Let $\calH$ be the class of Eilenberg--Mac\,Lane spectra $HA$ with $A\in\calA$, and, towards a contradiction, suppose that the class $\calH^{\perp\perp}$ is associated with a localization~$L$ on the homotopy category of spectra. Similarly as in the proof of Theorem~\ref{Thm3-1},
we first show that every \emph{connective} spectrum $X\in\calH^{\perp\perp}$ is in~$\calH$. If $S$ denotes the sphere spectrum, then the map $\Sigma S\to 0$ is in $\calH^{\perp}$, since $F^c(\Sigma S,HA)=0$ for every~$A$. Since $X$ is in~$\calH^{\perp\perp}$, we have that $X$ is orthogonal to $\Sigma S\to 0$ and hence $F^c(\Sigma S,X)=0$. This implies that $\pi_n(X)=0$ for $n\ge 1$, so $X\simeq HE$ for some abelian group~$E$, since $X$ is connective. There remains to show that $E\in\calA$, that is, $E\in\calA^{\perp\perp}$. 
For this, let $\varphi\colon P\to Q$ be any homomorphism in~$\calA^\perp$. Then the induced map $HP\to HQ$ is in $\calH^\perp$. It follows that the map $HP\to HQ$ is orthogonal to~$X$, and this implies that $E$ is orthogonal to~$\varphi$, as needed. 

Let $A$ be any abelian group, and consider the localization $\ell\colon HA\to LHA$ onto the class $\calH^{\perp\perp}$ and the induced group homomorphism $\ell_*\colon A\to B$ where $B=\pi_1(LHA)$. 
Since $\ell$ is also a localization of $HA$ with respect to the set~$\{\ell\}$,
it follows from \cite[Theorem~5.6]{CG} that
$LHA\simeq HB\times\Sigma HC$
for some abelian group $C$. 
Therefore, $LHA$ is connective. Since $LHA\in\calH^{\perp\perp}$, we may infer that $LHA\in\calH$, and consequently $C=0$ and $B\in\calA$.

We finally show that $\ell_*\colon A\to B$ is a reflection of $A$ onto $\calA$ and hence the class $\calA$ is reflective, which is a contradiction. If $E$ is any group in~$\calA$, then $HE$ is in $\calH$ and hence it is orthogonal to $\ell\colon HA\to HB$. This implies that $E$ is orthogonal to~$\ell_*$, as we wanted to prove.
\end{proof}

\subsection{Triangulated categories}
\label{triangulated}

It was shown in \cite[Theorem~2.4]{CGR} that, if Vop\v{e}nka's principle holds, then every full subcategory closed under products and fibres of the homotopy category of a stable locally presentable model category is reflective. Here we improve this result as follows.

\begin{corollary}
\label{Cor3-4}
The statement that every full subcategory closed under products and fibres of the homotopy category of a stable locally presentable model category is reflective is equivalent to the weak Vop\v{e}nka principle.
\end{corollary}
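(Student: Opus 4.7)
The plan is to prove the two implications separately, using Theorem~\ref{Thm3-3} for the reverse direction and the machinery of Section~\ref{reflections} (specifically Propositions~\ref{Prop2-1} and~\ref{Prop2-2}) for the forward direction.

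For the implication from WVP to the reflectivity statement, I would argue as follows. Since WVP is equivalent to SWVP by \cite{Wilson1}, I can assume SWVP. Let $\calM$ be a stable locally presentable model category and let $\calD$ be a full subcategory of $\Ho(\calM)$ closed under products and fibres. Because $\calD$ is closed under products, Proposition~\ref{Prop2-1} yields that $\calD$ is weakly reflective in $\calM$, hence also in $\Ho(\calM)$. I then want to apply Proposition~\ref{Prop2-2}, which demands that idempotents split in $\Ho(\calM)$ and that $\calD$ be closed under homotopy equalizers and retracts. The first condition holds by Neeman's theorem, since any triangulated category with countable products is idempotent complete. The second condition follows because $\Ho(\calM)$ is triangulated: the homotopy equalizer of a pair of parallel morphisms $f,g\colon X\to Y$ is the fibre of the difference $f-g\colon X\to Y$, so fibre-closedness implies equalizer-closedness.

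The step that requires care, and which I expect to be the main subtle point, is showing that $\calD$ is closed under retracts. If $Z$ is a retract of some $E\in\calD$ via morphisms $t'\colon E\to Z$ and $u'\colon Z\to E$ with $t'\circ u'=\mathrm{id}_Z$, then $e=u'\circ t'\colon E\to E$ is an idempotent endomorphism whose splitting is precisely~$Z$. In the triangulated setting, $Z$ is isomorphic to the fibre of the morphism $\mathrm{id}_E-e\colon E\to E$ (as $Z$ is the image of the idempotent~$e$, hence the kernel of $\mathrm{id}_E-e$). Therefore closure of $\calD$ under fibres forces $Z\in\calD$, as desired. With all three hypotheses of Proposition~\ref{Prop2-2} in place, $\calD$ is reflective.

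For the converse, I would appeal directly to Theorem~\ref{Thm3-3}. Assume that every full subcategory of the homotopy category of a stable locally presentable model category that is closed under products and fibres is reflective. Let $\Spectra$ be any of the standard model categories of spectra (Bousfield--Friedlander or symmetric spectra), which is stable, locally presentable, and simplicial. For an arbitrary class of morphisms $\calS$ in~$\Spectra$, recall from the last paragraph of Subsection~\ref{spectra-orghogonality} that $\calS^{\perp}$ is closed under all homotopy limits, in particular under products and fibres. By the hypothesis, $\calS^{\perp}$ is reflective in $\Ho(\Spectra)$, and by Corollary~\ref{Cor1-3} the resulting reflection is an $\calS$-localization. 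Thus $\calS$-localization exists in the homotopy category of spectra for every class $\calS$, and Theorem~\ref{Thm3-3} then gives WVP.

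Combining both directions yields the equivalence. The core structural insight driving the forward direction is the dictionary between fibres in the triangulated world and equalizers/retracts in the categorical world, which lets Proposition~\ref{Prop2-2} be applied even though the hypothesis only mentions closure under products and fibres.
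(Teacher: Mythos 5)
Your overall route is the same as the paper's: Proposition~\ref{Prop2-1} gives weak reflectivity from product-closure under SWVP, Proposition~\ref{Prop2-2} upgrades it using the identification of homotopy equalizers with fibres of differences, and the converse reduces to Theorem~\ref{Thm3-3} via the observation that every $\calS^\perp$ is closed under products and fibres. The converse direction is correct and is essentially the paper's argument. However, the step you yourself flag as the subtle one --- closure of $\calD$ under retracts --- is wrong as you argue it. You claim that a retract $Z$ of $E\in\calD$, with idempotent $e=u'\circ t'$, is the fibre of $\mathrm{id}_E-e$. That is abelian-category reasoning (``image of $e$ equals kernel of $\mathrm{id}-e$'') and fails in a triangulated category: writing $E\cong Z\oplus Z'$ with $e$ the projection onto $Z$, the morphism $\mathrm{id}_E-e$ is $0\oplus\mathrm{id}_{Z'}$, and since a triangle containing a zero map splits, the fibre of $0\colon Z\to Z$ is $Z\oplus\Sigma^{-1}Z$. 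Hence the fibre of $\mathrm{id}_E-e$ is $Z\oplus\Sigma^{-1}Z$, not $Z$, and extracting $Z$ from it would require precisely the retract-closure you are trying to establish.

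The statement is nevertheless true; the paper obtains it by citing \cite[Lemma~1.4.9]{HPS}. The actual argument is an Eilenberg swindle with countable products: from $E\cong Z\oplus Z'$ one gets an isomorphism $Z\times\prod_{n\ge 0}E\cong\prod_{n\ge 0}E$ by shifting summands, which exhibits $Z$ as the fibre of a split epimorphism between two countable products of copies of~$E$. Both products lie in $\calD$ by product-closure, so $Z\in\calD$ by fibre-closure (countable products exist in $\Ho(\calM)$ because $\calM$ is complete, which is also what makes idempotents split). With this repair, the rest of your forward direction goes through exactly as in the paper.
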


\begin{proof}
If $\calM$ is a stable model category, then the homotopy category $\Ho(\calM)$ is triangulated. Moreover, since every model category is cocomplete, idempotents split in $\Ho(\calM)$. 

Suppose that $\calM$ is locally presentable and let $\calD$ be a full subcategory of $\Ho(\calM)$ closed under products and fibres. Then we may infer that $\calD$ is also closed under retracts, as in~\cite[Lemma~1.4.9]{HPS}.
Assuming WVP, 
Proposition~\ref{Prop2-1} implies that $\calD$ is weakly reflective in $\calM$ and hence also in~$\Ho(\calM)$. Since a homotopy equalizer of two maps $f$ and $g$ is a fibre of the difference $f-g$, it follows from Proposition~\ref{Prop2-2} that $\calD$ is in fact reflective. 

To prove the converse, suppose that WVP does not hold, and hence there exists a non-reflective orthogonality class $\calA$ of abelian groups. If $\calH$ denotes the class of Eilenberg--Mac Lane spectra $HA$ with $A\in\calA$, then $\calH^{\perp\perp}$ is closed under fibres. Therefore a localization $\ell\colon HA\to LHA$ onto $\calH^{\perp\perp}$ exists for every abelian group $A$ and this implies that $\calA$ is reflective as in the proof of Theorem~\ref{Thm3-3}, which is a contradiction.
\end{proof}

Recall that a full subcategory of a triangulated category is \emph{colocalizing} if it is closed under products, fibres, cofibres, and extensions.
We do not know if the statement that every colocalizing subcategory of the homotopy category of spectra is reflective implies the weak Vop\v{e}nka principle (or any other large-cardinal principle). The class $\calH^{\perp\perp}$ used in the proof of  Corollary~\ref{Cor3-4} is not closed under suspensions, hence not colocalizing.

\section{Cohomological localizations}
\label{cohomological-localizations}

The following result is a substantial improvement over the state of the art regarding the existence of cohomological localizations, which is an open problem in~ZFC. It was proved in \cite[Theorem~9.5]{BCMR} that cohomological localizations exist if a proper class of supercompact cardinals exists.

\begin{corollary} 
\label{Cor4-1}
Cohomological localizations of simplicial sets or spectra exist if the weak Vop\v{e}nka principle holds.
\end{corollary}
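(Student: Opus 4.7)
The plan is to observe that cohomological localization is a special case of $\calS$-localization for a particular class of maps, and then invoke the existence theorems already established earlier in the article.

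First, I would fix a generalized cohomology theory $E^*$, represented either by a spectrum $E$ or by an $\Omega$-spectrum of pointed simplicial sets. The $E^*$-cohomological localization of an object $X$ is by definition its $\calS_E$-localization, where $\calS_E$ denotes the class of all morphisms $f$ (in $\sSet_*$ or in $\Spectra$, respectively) such that $E^*(f)$ is an isomorphism. This class is defined by a first-order formula in the language of set theory, but it is a proper class rather than a set, which is precisely why cohomological localization cannot be constructed by a direct application of the small object argument and why its existence is an open problem in ZFC.

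Second, having recognized $\calS_E$ as a (proper) class of maps, I would apply Theorem~\ref{Thm2-5} in the case of pointed or unpointed simplicial sets, and Theorem~\ref{Thm2-4} in the case of spectra. Both theorems conclude, under the semi-weak Vop\v{e}nka principle, that $\calS$-localization exists in the corresponding homotopy category for every class $\calS$ of maps. By Wilson's theorem \cite{Wilson1}, SWVP is equivalent to WVP, so the same conclusion follows from the weak Vop\v{e}nka principle, which is exactly what Corollary~\ref{Cor4-1} asserts.

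No substantive obstacle arises; the corollary is a direct application of Theorems~\ref{Thm2-4} and~\ref{Thm2-5} to the specific class $\calS_E$. The interest of the statement lies in the improvement over \cite[Theorem~9.5]{BCMR}, which required the existence of a proper class of supercompact cardinals, rather than in the proof itself. One might also remark, in passing, that the argument works uniformly for any (possibly multiplicative) generalized cohomology theory and likewise for $\infty$-categorical versions of cohomological localization, via Corollary~\ref{Cor3-2}.
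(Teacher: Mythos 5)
Your proof is correct and is essentially the paper's own argument: identify the class of $E^*$-equivalences as the class $\calS$ and apply Theorem~\ref{Thm2-4} (spectra) or Theorem~\ref{Thm2-5} (simplicial sets), with the passage from SWVP to WVP via Wilson's equivalence being implicit in the paper. No issues.
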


\begin{proof}
For a generalized cohomology theory $E^*$ defined on simplicial sets or spectra, let $\calS$ be the class of $E^*$-equivalences, that is, maps $X\to Y$ such that the induced homomorphisms $E^n(Y)\to E^n(X)$ are isomorphisms for all~$n\in\ZZ$. Then the reflectivity of $\calS^\perp$ follows from Theorem~\ref{Thm2-4} in the case of spectra and from Theorem~\ref{Thm2-5} in the case of simplicial sets. 
\end{proof}

The consistency strength of the large-cardinal assumption in Corollary~\ref{Cor4-1} can be lowered further as follows. The definition of \emph{$\Pi_n$-strong cardinals} for $n\ge 1$ is given in \cite[Definition~5.1]{BW}, and they form a hierarchy of strictly increasing strength. Moreover, the weak Vop\v{e}nka principle is 
equivalent to the claim that a proper class of $\Pi_n$-strong cardinals exists for all~$n$. 

\begin{theorem}
\label{Thm4-2}
The existence of a proper class of $\Pi_3$-strong cardinals implies the existence of cohomological localizations of simplicial sets or spectra.
\end{theorem}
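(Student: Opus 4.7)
The plan is to repeat the argument that proved Corollary~\ref{Cor4-1} while carefully bookkeeping the \emph{definability complexity} of the orthogonality class involved, and then to invoke the restricted Vop\v{e}nka-type principles that Bagaria and Wilson~\cite{BW} attach to each level of the $\Pi_n$-strong hierarchy. Concretely, for a generalized cohomology theory $E^*$ on simplicial sets or on spectra, let $\calS$ be the class of $E^*$-equivalences and let $\calS^\perp$ be the class of $E^*$-local objects. The first step is to observe that $\calS^\perp$ is definable by a formula of low L\'evy complexity: membership in $\calS^\perp$ amounts to the universal statement that every map whose induced homomorphisms on $E^n$ are all isomorphisms is orthogonal to~$X$, which, once quantification over the ambient presentable model is unpacked, can be written by a $\Pi_2$ formula in the language of set theory (with a parameter for~$E^*$).

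Next I would revisit the proof of Proposition~\ref{Prop2-1}. For any object~$X$, the sequence $\langle r_i^X\colon X\to X_i\mid i\in{\rm Ord}\rangle$ of partial weak reflections onto the products of truncations $\calD_i=\calS^\perp\cap V_i$ can itself be described by a formula whose complexity is only marginally higher than that of $\calS^\perp$, because the ingredients (coslice categories, products indexed by a set, transition maps) are absolute or $\Delta_1$ over the locally presentable model. Hence, if the sequence failed to stabilize, the resulting full embedding of ${\rm Ord}^{\rm op}$ into a coslice $(X\downarrow\calM)$ witnessing a violation of SWVP would be $\Pi_3$-definable. This is precisely the regime to which a proper class of $\Pi_3$-strong cardinals applies: by \cite{BW}, the principle asserting ``no $\Pi_3$-definable full embedding of ${\rm Ord}^{\rm op}$ into a locally presentable category,'' i.e.\ SWVP restricted to $\Pi_3$-definable sequences, is implied by the existence of a proper class of $\Pi_3$-strong cardinals.

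With the restricted SWVP in hand, the proof of Proposition~\ref{Prop2-1} goes through for the specific class $\calS^\perp$ and furnishes a weak reflection $X\to X^*$ onto $\calS^\perp$. Promoting this weak reflection to a reflection is then exactly the content of Proposition~\ref{Prop2-2} in the spectrum case (idempotents split in any triangulated category with countable products, and $\calS^\perp$ is closed under homotopy equalizers because it is closed under homotopy limits), and of the amended argument of Theorem~\ref{Thm2-5} in the simplicial-set case (reduce to connected, pointed spaces, where idempotents split by~\cite{FH}). The outcome is a reflector onto $\calS^\perp$, i.e.\ a cohomological localization with respect to $E^*$.

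The main obstacle is the bookkeeping in the middle paragraph: one must verify explicitly that the formula describing the alleged bad sequence $\langle r_i^X\rangle$ sits at the $\Pi_3$ level of the L\'evy hierarchy, neither higher nor lower. The class $\calS^\perp$ itself is $\Pi_2$, but passing to the coslice $(X\downarrow\calM)$, to the transfinite product construction of $X_i$, and to the statement that there is \emph{no} factorization $r_j=q\circ r_i$ each contributes one quantifier alternation; one has to check that with a careful choice of parameters this total stays within $\Pi_3$, so that the $\Pi_3$-strong hypothesis is exactly enough and no stronger large-cardinal input is needed.
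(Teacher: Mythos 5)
Your overall strategy is the same as the paper's: re-run Proposition~\ref{Prop2-1} for the class of $E^*$-local objects, bound the L\'evy complexity of the hypothetical non-stabilizing sequence $\langle r_{i_s}\rangle$, and invoke the Bagaria--Wilson definable-SWVP result, then finish with Proposition~\ref{Prop2-2} (resp.\ the connected pointed variant for simplicial sets). However, the entire content of the theorem beyond Corollary~\ref{Cor4-1} is the complexity computation, and you have left exactly that step as an acknowledged ``obstacle''---and your guess at the answer is off. The paper's computation does not land at $\Pi_3$: defining $r_i$ already requires a reflection device (the classes $C^{(n)}$ of $\Sigma_n$-correct cardinals from~\cite{Bagaria}) to replace the unbounded quantification over all $E^*$-local spaces and all $E^*$-equivalences in $V$ by quantification inside some $V_\theta$ with $\theta\in C^{(1)}$, using that the class of $E^*$-equivalences is $\Delta_2$ with parameter~$E$ (by \cite[Theorem~9.3]{BCMR}); this makes the definition of $r_i$ a $\Pi_2$ statement. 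Extracting the non-stabilizing subsequence then forces one up to $\theta\in C^{(3)}$ (minimality of $i_s$ adds a layer of unbounded quantification over the $\Pi_2$ definition), and the recursion defining $s\mapsto i_s$ contributes a further existential quantifier, so the sequence~\eqref{complexity} is $\Sigma_4$, not~$\Pi_3$.

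Correspondingly, the principle you quote from~\cite{BW} is not the one that is used: the relevant statement is \cite[Theorem~5.13]{BW}, that a proper class of $\Pi_3$-strong cardinals implies SWVP for $\Sigma_4$ classes with parameters---not ``SWVP restricted to $\Pi_3$-definable sequences.'' Your $\Pi_3$ estimate would of course be subsumed by the $\Sigma_4$ bound if it were correct, but as written your argument neither establishes the bound nor cites the matching large-cardinal input, so the link between ``$\Pi_3$-strong'' and the definable sequence is not actually made. To close the gap you need (i) the $C^{(n)}$-reflection argument that keeps the definition of each $r_i$ at the $\Pi_2$ level, (ii) an honest count of the extra quantifiers coming from minimality and recursion in the choice of the $i_s$, yielding $\Sigma_4$, and (iii) the precise form of the Bagaria--Wilson theorem for $\Sigma_4$ classes. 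The remainder of your argument (weak reflection to reflection via Proposition~\ref{Prop2-2} and the pointed connected reduction) is fine and matches the paper.
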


\begin{proof}
In this proof, for brevity, we call ``space'' a simplicial set or a spectrum.
For the validity of the proof of Proposition~\ref{Prop2-1}, it is sufficient that the semi-weak Vop\v{e}nka principle holds for the sequence of maps 
\begin{equation}
\label{complexity}
\langle r_{i_s}\colon X\to X_{i_s}\mid s\in{\rm Ord}\rangle
\end{equation}
considered in that proof, where $X$ is any space. Let us estimate the L\'evy complexity of a definition of~\eqref{complexity}.
Recall from \cite[\S\,9]{BCMR} that the categories of simplicial sets and Bousfield--Friedlander spectra are~$\Delta_0$ ---that is, they can be defined without unbounded quantifiers---, since they are categories of $\omega$-sorted operational structures. It is also shown in \cite[\S\,9]{BCMR} that the class of $\Omega$-spectra is $\Delta_1$, i.e., it can be defined with a $\Sigma_1$ formula or, alternatively, with a $\Pi_1$ formula. For a fixed $\Omega$-spectrum~$E$, according to \cite[Theorem~9.3]{BCMR}, the class of $E^*$\nobreakdash-equiv\-alences is $\Delta_2$ definable with $E$ is a parameter. 

Given a space~$X$, we need to define a class function $r$ from $\rm Ord$ to objects in $(X\downarrow\sSet)$, by starting with $X\to *$ for $i=0$ and letting $r_i\colon X\to X_i$, for every ordinal~$i\ge 1$, be the product of all maps from $X$ to $E^*$-local spaces whose rank is smaller than~$i$. 
This definition can be formalized with a $\Pi_2$ formula using the following argument, which we owe to Joan Bagaria. 

For $n\ge 0$, let $C^{(n)}$ be the class of ordinals $\theta$ such that $V_\theta$ 
is $\Sigma_n$\nobreakdash-correct in~$V$, that is, $V_\theta$ is a $\Sigma_n$\nobreakdash-elem\-entary substructure of~$V$. As explained in~\cite{Bagaria}, members of $C^{(n)}$ are uncountable limit cardinals for $n>0$, and the statement that $\theta\in C^{(n)}$ is $\Pi_n$ expressible. Moreover, if $\theta\in C^{(n)}$ and $\varphi$ is a $\Sigma_{n+1}$ formula with parameters in $V_\theta$ that holds in~$V_\theta$, then $\varphi$ holds in~$V$.
Hence, if $\theta\in C^{(1)}$ and $E\in V_\theta$, then, if a space $X\in V_\theta$ is orthogonal to all $E^*$-equivalences in $V_\theta$ then $X$ is orthogonal to all $E^*$-equivalences in~$V$, since the class of $E^*$\nobreakdash-equiv\-alences is $\Sigma_2$ definable. 
Therefore, defining $r_i\colon X\to X_i$ amounts to stating that, for all cardinals~$\theta$, if $\theta\in C^{(1)}$ and $\theta>\max\{i,\,{\rm rank}(X),{\rm rank}(E)\}$, then $r_i$ is the product in $V_\theta$ of all maps from $X$ to $E^*$-local spaces in $V_\theta$ whose rank is smaller than~$i$. The complexity of this statement is~$\Pi_2$, because the definition of $r_i$ within $V_\theta$ involves quantifiers bounded by~$V_\theta$.

Next, let $i_0=0$ and recursively define a class function $i\colon{\rm Ord}\to{\rm Ord}$ by picking, for a successor ordinal $s\ge 1$, the smallest ordinal $i_s>i_{s-1}$ such that there is~no map $q\colon X_{i_{s-1}}\to X_{i_s}$ with $r_{i_s}=q\circ r_{i_{s-1}}$. If $s$ is a limit ordinal, then $i_s$ is defined as the smallest ordinal bigger than $i_t$ for all $t<s$ such that there~is no map $q\colon X_{i_t}\to X_{i_s}$ with $r_{i_s}=q\circ r_{i_t}$ for any $t<s$.
To formalize this definition, we need to pick a cardinal $\theta\in C^{(3)}$, since the definition of $r_{i_t}$ for $t<s$ is $\Pi_2$ expressible and we need one more layer of unbounded quantification in order to impose minimality of~$i_s$. Due to the fact that our definition of $i$ is recursive, one additional existential quantifier is required. Hence our complexity assessment for \eqref{complexity} results in~$\Sigma_4$.

The sequence \eqref{complexity} that we have just defined exists if we assume that the class of $E^*$-local spaces is not weakly reflective.
By~\cite[Theorem~5.13]{BW}, if a proper class of $\Pi_3$-strong cardinals exists, then SWVP for $\Sigma_4$ classes with parameters holds. In this case, the existence of the sequence \eqref{complexity} causes a contradiction.
This contradiction tells us that the class of $E^*$-local spaces is weakly reflective, and then Proposition~\ref{Prop2-2} implies that it is reflective. \end{proof}

\section{Examples and counterexamples}
\label{examples}

\begin{example}
\label{ex1} 
This example shows that the statement that every full subcategory closed under products in a locally presentable category is weakly reflective ---which has been proved under SWVP in Proposition~\ref{Prop2-1}--- 
cannot be proved in ZFC.
In \cite[Corollary~2.7]{CGR}, a class of spectra closed under products and retracts but not weakly reflective was exhibited assuming the nonexistence of measurable cardinals. That class consists of Eilenberg--Mac Lane spectra $HA$ where $A$ belongs to the closure of the class of groups $\ZZ^\kappa/\ZZ^{<\kappa}$ under products and retracts, where $\kappa$ runs over all cardinals and $\ZZ^\kappa$ denotes a product of copies of $\ZZ$ indexed by $\kappa$ while $\ZZ^{<\kappa}$ is the subgroup of those sequences whose support has cardinality smaller than~$\kappa$.
\end{example}

\begin{example}
\label{ex2} 
It is not true that every full subcategory closed under products and retracts in $\Ho(\sSet_*)$ is reflective, not even assuming large-cardinal principles. To illustrate this fact, we recall that the class $\calD$ of $1$-connected simplicial sets is closed under products and retracts but it is not reflective. The following argument is due to Mislin \cite[A.1.3]{Farjoun}. Suppose that a map $\ell\colon \RR P^2\to X$ is a reflection in $\Ho(\sSet_*)$ onto~$\calD$, where $\RR P^2$ denotes the real projective plane. Then $\ell$ induces an isomorphism
\[
[X,K(\ZZ,2)]\cong [\RR P^2,K(\ZZ,2)].
\]
However, $[\RR P^2,K(\ZZ,2)]\cong H^2(\RR P^2;\ZZ)\cong\ZZ/2$ while 
\[
[X,K(\ZZ,2)]\cong H^2(X;\ZZ)\cong{\rm Hom}(H_2(X;\ZZ),\ZZ)
\] 
is torsion-free for every $1$-connected space~$X$. This does not contradict Theorem~\ref{Thm2-5} because $\calD$ is not of the form $\calS^\perp$ for any class of maps $\calS$, since the homotopy fibre of a map between $1$-connected spaces need not be $1$-connected, e.g., $S^1\to S^3\to S^2$, and therefore $\calD$ is not closed under homotopy limits.

It is interesting to note that, nevertheless, the class $\calD$ of $1$-connected spaces is indeed weakly reflective. A~weak reflection can be described explicitly for topological spaces as follows. Given any space~$X$, define $X\hookrightarrow X(0)$ by choosing a point in each path-connected component of $X$ and attaching an edge between every pair of such points. Next, define $X(0)\hookrightarrow X(1)$ by choosing a presentation of the fundamental group $\pi_1(X(0))$ and attaching a $2$-cell to $X(0)$ for each generator in the chosen presentation. Then the space $X(1)$ is $1$-connected and every continuous map $X\to Y$ where $Y$ is $1$-connected extends to~$X(1)$. Similarly, the class of $n$-connected spaces is weakly reflective for all~$n$.

As another example, the class $\calQ$ of spaces whose fundamental group is uniquely radicable and whose higher homotopy groups are $\QQ$-vector spaces for every choice of a basepoint is closed under products and retracts but it is not reflective in $\Ho(\sSet_*)$, as shown in~\cite{PAMS}.
The reason is that if $\ell\colon S^1\to X$ were a reflection onto~$\calQ$, then it would follow that $\pi_1(X)\cong\QQ$ and 
\[
H^2(X;\QQ[\QQ])\cong H^2(S^1;\QQ[\QQ]),
\]
where $\QQ[\QQ]$ denotes the group ring of $\QQ$ with rational coefficients, and cohomology with twisted coefficients is meant. However, $H^2(S^1;\QQ[\QQ])=0$, while, as proved in \cite[Proposition~2.1]{PAMS}, $H^2(X;\QQ[\QQ])\ne 0$, which is a contradiction.
\end{example}

\begin{example}
\label{ex3}
There are reflections onto full subcategories closed under products and retracts in $\Ho(\calM)$ for a simplicial model category $\calM$ that are not $\calS$-localizations for any class of morphisms~$\calS$, and, moreover, cannot be lifted to coaugmented functors on~$\calM$.
Our main example involves the class $\calD$ of connective spectra whose homotopy groups are $\QQ$\nobreakdash-vector spaces. A~reflection onto $\calD$ in $\Ho(\Spectra)$ can be given explicitly as follows. For an arbitrary spectrum $X$, let $X\wedge H\QQ$ be its rationalization. Since $X\wedge H\QQ$ splits as a wedge $\bigvee_{k\in\ZZ}\, \Sigma^kH(\pi_k(X)\otimes\QQ)$, we can retract it into 
\[
LX=\bigvee_{k\ge 0}\, \Sigma^kH(\pi_k(X)\otimes\QQ),
\]
or into any other segment. The composite $\ell\colon X\to LX$ is a reflection onto~$\calD$, since every map $X\to Y$ where $Y\in\calD$ factors uniquely through $X\wedge H\QQ$ up to homotopy, and $[\Sigma^kHA,Y]=0$ if $k<0$, for all~$A$, since $Y$ splits and ${\rm Ext}(A,\QQ)=0$ for all~$A$.
However, the class $\calD$ is not closed under fibres and therefore $L$ is not an $\calS$-localization for any class of maps~$\calS$.

In this example, $L$ can be lifted to an endofunctor on $\Spectra$, namely the composite of $(-)\wedge H\QQ$ with passage to the connective cover. However, while the first functor is coaugmented, the second functor is augmented, that is, there is a zig-zag of natural transformations
\[
X\longrightarrow X\wedge H\QQ \longleftarrow (X\wedge H\QQ)^c.
\]
The second arrow can be reversed in $\Ho(\Spectra)$, but not in $\Spectra$. Indeed,
it follows from \cite[Theorem~2.2]{CCh} that there does not exist any natural transformation ${\rm Id}\to L$ in $\Spectra$ lifting the unit $\ell$ of the reflector $L$ on $\Ho(\Spectra)$.
\end{example}

\section{Bousfield localizations}
\label{functoriality}

In this section we discuss to what extent an $\calS$-localization on a combinatorial model category $\calM$ obtained using WVP can be enhanced to a left Bousfield localization. The main difficulty is that WVP yields homotopical localizations that are not necessarily functorial on~$\calM$.

Left Bousfield localization of a model category $\calM$ with respect to a class of morphisms $\calS$ is a model structure $\calM_{\cal S}$ on the same underlying category as $\calM$ together with a left Quillen functor $\calM\to\calM_{\calS}$ which is initial among left Quillen functors that send morphisms in $\calS$ to weak equivalences. Such a model category is known to exist if $\calM$ is combinatorial and left proper and $\calS$ is a (small) set. If a left Bousfield localization $\calM_{\calS}$ exists, then a fibrant replacement on $\calM_{\calS}$  yields an $\calS$-localization on~$\calM$.

A \emph{semi-model category} is defined with the same axioms as a model category, except that the lifting axiom and the factorization axiom hold only for morphisms with fibrant codomain (in \emph{right} semi-model categories) or instead with cofibrant domain (in \emph{left} semi-model categories). It was shown in \cite{BWh} that if the assumption that $\calM$ be left proper is omitted, then a Bousfield localization $\calM_\calS$ for a (small) set of morphisms $\calS$ still exists as a left semi-model category. An example where left properness fails and left Bousfield localization does not exist as a model category is given in \cite[Example~3.48]{Voevodsky}.

\begin{proposition}
\label{Prop6-1}
Let $\calS$ be a class of morphisms in a 
combinatorial simplicial 
model category $\calM$ in which all objects are cofibrant. If the weak Vop\v{e}nka principle holds, then a left Bousfield localization $\calM_\calS$ exists as a right semi-model category.
\end{proposition}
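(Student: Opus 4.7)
The plan is to define $\calM_\calS$ with the same underlying category and the same cofibrations as $\calM$, with weak equivalences the $\calS$-equivalences and fibrations defined by the right lifting property against every cofibration that is an $\calS$-equivalence. With this setup, the fibrant objects of $\calM_\calS$ are exactly the objects of $\calS^\perp$ that are fibrant in $\calM$, and every $\calS$-fibration is already a fibration of $\calM$, because the trivial cofibrations of $\calM$ are among the $\calS$-trivial cofibrations. Closure under retracts and two-out-of-three for $\calS$-equivalences are straightforward, so the task is to verify the factorization and lifting axioms of a right semi-model category for maps whose codomain is $\calS$-fibrant.

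The crucial preliminary is a non-functorial production of $\calS$-trivial cofibrations into $\calS$-fibrant objects. For each $X$ in $\calM$, Corollary~\ref{Cor2-3} yields a reflection $\eta_X\colon X\to LX$ onto $\calS^\perp$ in $\Ho(\calM)$ with $LX$ fibrant in $\calM$, and by Corollary~\ref{Cor1-3}, since $X$ is cofibrant, $\eta_X$ is an $\calS$-equivalence in $\calM$ itself. Factoring $\eta_X$ in $\calM$ as a cofibration followed by a trivial fibration produces a cofibration $j_X\colon X\to L'X$ that is an $\calS$-equivalence, with $L'X$ still $\calS$-fibrant. Now, given $f\colon X\to Y$ with $Y$ $\calS$-fibrant, the SM7 axiom combined with $Y\in\calS^\perp$ makes $\Map(L'X,Y)\to\Map(X,Y)$ a trivial fibration of simplicial sets, so $f$ extends to a strict map $g\colon L'X\to Y$ with $g\circ j_X=f$. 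Factoring $g$ in $\calM$ as a trivial cofibration $i'\colon L'X\to Z$ followed by a fibration $p\colon Z\to Y$ yields $f=p\circ(i'\circ j_X)$, where $i'\circ j_X$ is a cofibration and an $\calS$-equivalence and $Z$ is $\calM$-weakly equivalent to $L'X$, hence in $\calS^\perp$. Since $Z$ and $Y$ are both $\calS$-fibrant, a standard SM7 pullback-square argument shows that for every $\calS$-trivial cofibration $A\to B$ the map $\Map(B,Z)\to\Map(A,Z)\times_{\Map(A,Y)}\Map(B,Y)$ is a trivial fibration; hence $p$ is an $\calS$-fibration. The other factorization of $f$, as a cofibration followed by a trivial $\calS$-fibration, is inherited directly from $\calM$.

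The last step is the lifting axioms. Lifts of $\calS$-trivial cofibrations against $\calS$-fibrations are definitional. The delicate case is to lift a cofibration against a trivial $\calS$-fibration $p\colon Z\to Y$ with $Y$ $\calS$-fibrant. For this, construct $j_Z\colon Z\to L'Z$ as above, extend $p$ to $g'\colon L'Z\to Y$ by the same SM7 argument, and then lift the identity of $Z$ along $j_Z$ against $p$, using that $j_Z$ is an $\calS$-trivial cofibration and $p$ is an $\calS$-fibration. This produces a retraction $L'Z\to Z$, so $Z$ is a retract of the $\calS$-local object $L'Z$ and therefore $Z\in\calS^\perp$. Then $p$ is an $\calS$-equivalence between $\calS$-local objects, hence a weak equivalence in $\calM$ by the standard homotopy-equivalence argument in the simplicial enrichment; since $p$ is already a fibration, it is a trivial fibration in $\calM$ and lifts against every cofibration. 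The main obstacle throughout is precisely the non-functoriality of the $\calS$-fibrant replacements produced by WVP: without a set of generating trivial cofibrations the small-object argument is unavailable, so every factorization must be reassembled by hand from SM7 together with the retraction trick, and this is exactly why one can only obtain a right semi-model structure with non-functorial factorizations.
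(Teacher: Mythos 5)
Your overall strategy is a legitimate and genuinely different route from the paper's: you build the right semi-model structure by hand (same cofibrations, $\calS$-equivalences as weak equivalences, fibrations defined by lifting), verifying the factorization and lifting axioms over $\calS$-fibrant codomains via SM7 corner-map arguments and the retraction trick, whereas the paper constructs only the localization maps $\ell_X\colon X\to LX$, shows the localization squares can be closed strictly (condition A.2 of Biedermann--Chorny, using fibrancy of $LY$), and then delegates the assembly of the semi-model structure to Carmona's theorem. Your version is self-contained and makes transparent why only a right semi-model structure is obtained; the paper's is shorter because the combinatorial work is outsourced.

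There is, however, one genuine error at the first substantive step: you invoke Corollary~\ref{Cor2-3} to produce the reflection $\eta_X\colon X\to LX$ onto $\calS^\perp$ in $\Ho(\calM)$, but that corollary requires idempotents to split in $\Ho(\calM)$, which is not a hypothesis of the proposition and genuinely fails for combinatorial simplicial model categories with all objects cofibrant --- $\calM=\sSet_*$ is the standard example, as recalled in Section~\ref{spectra-simplicial-sets}. The weak reflection produced by Proposition~\ref{Prop2-1} cannot in general be upgraded to a reflection inside $\Ho(\calM)$ by the equalizer-plus-splitting argument. This is precisely why the paper routes the existence of the reflection through Theorem~\ref{Thm2-6}: the presentable $\infty$-category $\calMo$ is idempotent complete even when $\Ho(\calM)$ is not, so one obtains an $\infty$-categorical reflection there and then transfers it, via Corollary~\ref{Cor1-3} and cofibrancy of $X$, to an honest $\calS$-equivalence $X\to LX$ in $\calM$ with fibrant target. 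With that substitution your construction of $j_X\colon X\to L'X$ and everything downstream (the extension of maps into $\calS$-fibrant objects, the identification of trivial $\calS$-fibrations over $\calS$-fibrant objects with $\calM$-trivial fibrations) goes through as you describe.
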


\begin{proof}
Consider the $\infty$-category $\calC=\calM^\circ$ of fibrant objects in~$\calM$. Since $\calM$ is combinatorial, $\calC$ is presentable, according to \cite[A.3.7.6]{Lurie}. Hence, Theorem~\ref{Thm2-6} yields an $\infty$-categorical reflection $E\colon\calC\to\calC$ onto $(R\calS)^\perp$, where $R$ is a fibrant replacement functor on~$\calM$ ---we omit cofibrant replacement since, by assumption, all objects in $\calM$ are cofibrant. By Corollary~\ref{Cor1-3}, the composite $X\to RX\to ERX$ is an $\calS$-localization of $X$ for every $X$ in~$\calM$.

Let us factor this composite, for every~$X$, into a cofibration $\ell_X\colon X\to LX$ followed by a trivial fibration $LX\to ERX$. Hence $\ell_X$ is also an $\calS$-localization of~$X$.
Now, given any morphism $f\colon X\to Y$ in~$\calM$, the diagram
\begin{equation}
\label{square}
\xymatrix@C=4pc@R=2pc{
  X \ar[r]^-{\ell_X} \ar[d]_{f} & LX \\
  Y \ar[r]^-{\ell_Y} & LY
}
\end{equation}
can be closed strictly. Indeed, since $LY$ is fibrant and there is a morphism $LX\to LY$ making \eqref{square} homotopy commutative, we can replace this morphism with a homotopic one, which we denote by $Lf\colon LX\to LY$,
such that $Lf\circ\ell_X=\ell_Y\circ f$; see \cite[A.2.3.1]{Lurie} for details.

In this situation, even though $L$ need not be an endofunctor of~$\calM$, condition A.2 from \cite{BCh} is fulfilled. Conditions A.3 and A.4 from \cite{BCh} are also fulfilled, since the class of $L$-equivalences is equal to $\calS^{\perp\perp}$, and condition A.5 is checked with a similar argument as in \cite[Proposition~6.6]{BCh}. 

As pointed out in \cite[Proposition~3.13]{Carmona}, the fact that $(L,\ell)$ satisfies conditions A.2--A.5 from \cite{BCh} ensures the existence of a right semi-model structure $\calM_\calS$ as in \cite[Theorem~3.5]{Carmona}. \end{proof}

In the semi-model structure given by Proposition~\ref{Prop6-1}, factorizations are not necessarily functorial, due to the fact that there need not be a fibrant replacement functor on $\calM_\calS$ lifting~$L$.

In favorable cases, the semi-model category structure on $\calM_\calS$ given by Proposition~\ref{Prop6-1} can be further enhanced to a model category structure, as in the following situation. Given a class $\calW$ of pointed simplicial sets, a Kan complex $X$ is called \emph{$\calW$-null} if the map from $X$ to $\map(W,X)$ induced by $W\to *$ is a weak equivalence for all $W\in\calW$. If $X$ is connected, this is equivalent to imposing that $\map_*(W,X)$ be contractible for all $W\in\calW$. Nullification with respect to $\calW$ is a special case of an $\calS$-localization, by letting $\calS$ be the class of maps $W\to *$ for all $W\in\calW$. The main properties of nullification functors with respect to sets carry over to nullifications with respect to classes, provided that these exist. Specifically, in the proof of the next result we use \cite[Corollary~4.8]{BousfieldJAMS} and \cite[Corollary~D.3]{Farjoun}.

\begin{proposition}
\label{Prop6-2}
Let $\calS$ be a class of maps of pointed simplicial sets whose codomain is a one-point space. If the weak Vop\v{e}nka principle holds, then a left Bousfield localization $\calM_\calS$ exists as a model category.
\end{proposition}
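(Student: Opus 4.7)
The plan is to first invoke Proposition~\ref{Prop6-1} to obtain a right semi-model category structure $\calM_\calS$ on $\sSet_*$, with fibrant replacement given by an $\calS$-nullification $\ell_X\colon X\to LX$ for every pointed simplicial set~$X$. The remaining task is to upgrade this to a full model category structure, which requires functorial factorizations together with the lifting and factorization axioms for morphisms whose codomain is not already fibrant in $\calM_\calS$.

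The key observation is that, since every map in $\calS$ has codomain a one-point space, the class $\calS^\perp$ of $\calS$-null Kan complexes enjoys much stronger closure properties than the orthogonal of a generic class of maps. Specifically, by \cite[Corollary~4.8]{BousfieldJAMS} the $\calS$-null condition is detected by an iterated mapping-space criterion that is stable under loop spaces and homotopy fibres, and by \cite[Corollary~D.3]{Farjoun} the nullification interacts well with fibre sequences and fibrewise constructions. Together these imply that $\calS^\perp$ is closed under homotopy fibres and that the class of $\calS$-equivalences is closed under cobase change along cofibrations; in other words, the localized structure is left proper.

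With left properness established, I would then strictify the homotopy-coherent reflector $L$ produced in Proposition~\ref{Prop6-1}. Using the closure properties of nullification, together with the fact that every object in $\sSet_*$ is cofibrant, the assignment $f\mapsto Lf$ can be rigidified into a genuine endofunctor on $\sSet_*$, rather than merely a functor up to homotopy. This yields functorial factorizations in $\calM_\calS$ and, combined with left properness, upgrades the right semi-model structure of Proposition~\ref{Prop6-1} to a full left Bousfield model category structure.

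The main obstacle is precisely this functoriality issue: since $\calS$ is a proper class, the small object argument is unavailable, so functorial factorizations cannot be built in the standard way. The resolution exploits the rigidity of nullification provided by the cited Bousfield and Farjoun results, a feature absent for general $\calS$-localizations, which is why Proposition~\ref{Prop6-1} only yields a right semi-model structure in the general case.
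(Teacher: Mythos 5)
There is a genuine gap: you have misidentified the obstruction to passing from the right semi-model structure of Proposition~\ref{Prop6-1} to a full model structure. The missing ingredient is not functoriality of factorizations, nor left properness, but condition A.6 of \cite{BCh} --- equivalently Bousfield's condition (A3) from \cite[\S\,9.2]{BousfieldTAMS}: pullbacks of $L$-equivalences along fibrations between $L$-local fibrant objects must again be $L$-equivalences. This is a right-properness-type condition on homotopy pullbacks, not a statement about cobase change along cofibrations. It is exactly here that the hypothesis on $\calS$ enters: because every map in $\calS$ has a one-point codomain, $L$ is a nullification, so a map with $L$-local codomain is an $L$-equivalence if and only if its homotopy fibre is $L$-acyclic (this is what \cite[Corollary~4.8]{BousfieldJAMS} and \cite[Corollary~D.3]{Farjoun} supply); since homotopy fibres are preserved under homotopy pullback, (A3) follows, and \cite[Theorem~A.8]{BCh} then yields the model structure (which is in fact right proper). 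Your argument never verifies any condition of this kind, so it does not produce the lifting and factorization axioms for morphisms with non-fibrant codomain.

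Your proposed fix --- rigidifying $L$ into a genuine coaugmented endofunctor on $\sSet_*$ --- is also unjustified and is in tension with the paper's own discussion: the remark after Proposition~\ref{Prop6-1} and Example~\ref{ex3} make the point that a reflector on the homotopy category need not lift to a coaugmented endofunctor on the model category, and nothing in the cited results on nullification gives such a strictification when $\calS$ is a proper class (the small object argument being unavailable, as you note). Fortunately, no such lift is needed: the construction via conditions A.2--A.5 plus (A3) produces a model structure whose factorizations are not claimed to be functorial, consistent with the paper's convention of dropping functorial factorizations in this section.
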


\begin{proof}
Under WVP, an $\calS$-localization $L$ exists on $\Ho(\sSet_*)$. Furthermore, as in the proof of Proposition~\ref{Prop6-1}, we can assume that $L$ satisfies conditions A.2--A.5 from \cite{BCh}. 
Although condition A.6 is hard to verify, a more useful alternative condition (A3) was  stated in \cite[\S\,9.2]{BousfieldTAMS}, by imposing that pullbacks of $L$-equivalences along fibrations between $L$-local fibrant objects are $L$\nobreakdash-equiv\-alences. 
As we next show, this condition is satisfied if $L$ is a nullification of simplicial sets ---in fact, by \cite[Remark~9.11]{BousfieldTAMS}, if (A3) holds then $L$ is necessarily a nullification. Thus, suppose given a pull-back square
\begin{equation}
\label{A3}
\xymatrix@C=3pc@R=3pc{
  A \ar[r] \ar[d]_{h^*} & B\ar[d]^{h} \\
  X \ar[r]^-{f} & Y
}
\end{equation}
in which $h$ is an $L$-equivalence and $f$ is a fibration between $L$-local Kan complexes. We can assume that \eqref{A3} is a homotopy pull-back square. Since $Y$ is $L$-local and $L$ is a nullification, the fibre of $h$ is sent by $L$ to a contractible space. Consequently, the fibre of $h^*$ is also sent by $L$ to a contractible space and this implies that $h^*$ is an $L$-equivalence. Since (A3) holds, the existence of a left Bousfield localization $\calM_\calS$ follows as in \cite[Theorem~A.8]{BCh}.
\end{proof}

According to \cite[Theorem~A.8]{BCh}, the model structure $\calM_\calS$ provided by Proposition~\ref{Prop6-2} is right proper. An example is Quillen's plus-construction, which is a nullification with respect to the class of all $H_*$-acyclic spaces for ordinary homology. However, left Bousfield localization of $\sSet_*$ at the class of $H_*$-equivalences is not right proper, since the fibre of an $H_*$-equivalence need not be $H_*$-acyclic, not even if the codomain is an $H_*$-local space.

Proposition~\ref{Prop6-2} also holds for spectra, with the same proof (replacing the one-point space with the zero spectrum), if we choose the injective stable model structure on symmetric spectra over simplicial sets~\cite{HSS}, which is combinatorial and simplicial, and every spectrum in it is cofibrant.

As proved in~\cite{Dugger1}, every combinatorial model category is Quillen equivalent to a simplicial one in which every object is cofibrant. Hence, the assumptions made in Proposition~\ref{Prop6-1} that $\calM$ is simplicial and that all objects of $\calM$ are cofibrant can be removed as follows. 

\begin{proposition}
\label{Prop6-3}
Let $\calS$ be a class of morphisms in a combinatorial model category $\calM$. If the weak Vop\v{e}nka principle holds, then $\calS$\nobreakdash-local\-ization exists on $\Ho(\calM)$ and can be enhanced to a left Bousfield localization, as a right semi-model category, on a combinatorial simplicial model category Quillen equivalent to~$\calM$.
\end{proposition}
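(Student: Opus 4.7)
The strategy is to reduce to the simplicial setting of Proposition~\ref{Prop6-1} via Dugger's simplicial replacement theorem, and then transfer the resulting Bousfield localization back to $\calM$ across the induced Quillen equivalence.

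By \cite[Corollary~1.2]{Dugger1}, there is a combinatorial simplicial model category $\calM'$ in which every object is cofibrant, together with a Quillen equivalence $F\colon \calM \rightleftarrows \calM' \colon G$. Fix a cofibrant replacement functor $Q$ on $\calM$ and set $\calS' = \{F(Qf) \mid f \in \calS\}$, a class of morphisms in $\calM'$. Applying Proposition~\ref{Prop6-1} to $\calM'$ and $\calS'$ yields a left Bousfield localization $\calM'_{\calS'}$ as a right semi-model category; this is the combinatorial simplicial model category Quillen equivalent to $\calM$ demanded by the statement, since $F$ remains left Quillen into $\calM'_{\calS'}$.

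For the existence of an $\calS$-localization on $\Ho(\calM)$, one uses that a Quillen equivalence induces an equivalence of the underlying $\infty$-categories of $\calM$ and $\calM'$, equivalently a Dwyer--Kan equivalence of hammock localizations as recalled in Subsection~\ref{underlying}. This equivalence preserves and reflects simplicial orthogonality, so the class $\calS^\perp$ in $\Ho(\calM)$ corresponds to $(\calS')^\perp$ in $\Ho(\calM')$. Since Proposition~\ref{Prop6-1} provides an $\calS'$-localization on $\calM'$ via Corollary~\ref{Cor1-3}, this transfers to the required $\calS$-localization on $\Ho(\calM)$.

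The main obstacle is the careful verification that the passage from $\calS$ to $\calS'$ preserves orthogonality in the strong sense needed, so that reflections in $\Ho(\calM')$ onto $(\calS')^\perp$ descend to reflections in $\Ho(\calM)$ onto $\calS^\perp$. This reduces to the standard fact that the derived functors of a Quillen equivalence induce an equivalence of underlying $\infty$-categories, which carries orthogonal complements bijectively to orthogonal complements; once this identification is in place, both assertions of the proposition follow directly from Proposition~\ref{Prop6-1}.
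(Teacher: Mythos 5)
Your proposal follows essentially the same route as the paper: replace $\calM$ by Dugger's simplicial presentation $\calM'$ in which all objects are cofibrant, transport $\calS$ along the derived functors of the Quillen equivalences, apply Proposition~\ref{Prop6-1} there, and use the induced equivalence of underlying $\infty$-categories to carry the reflection back to $\Ho(\calM)$. The only cosmetic discrepancy is that \cite[Corollary~1.2]{Dugger1} produces a zig-zag $\calM'\leftarrow\calM''\rightarrow\calM$ rather than a single left Quillen functor $F\colon\calM\to\calM'$, so your class $\calS'$ should be defined as the image of $\calS$ under the composite of derived functors along this zig-zag (the paper's $D\calS$); this does not affect the argument.
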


\begin{proof}
As shown in \cite[Corollary~1.2]{Dugger1}, for every combinatorial model category $\calM$ there is a zig-zag of Quillen equivalences
\begin{equation}
\label{dugger}
\calM'\longleftarrow \calM''\longrightarrow \calM,
\end{equation}
where the arrow indicates the left adjoint, in which $\calM''$ is a left Bousfield localization of a model category of simplicial presheaves and $\calM'$ is the corresponding localization of the same category endowed with the Heller model structure, in which all objects are cofibrant. Both $\calM'$ and $\calM''$ are combinatorial and simplicial.

By \cite[Theorem~2.1]{Mazel-gee}, the $\infty$\nobreakdash-categories $\mathbf{R} L^H\calM$ and $\mathbf{R} L^H\calM'$ are equivalent, where $\mathbf{R}$ denotes fibrant replacement in the Bergner model structure,
and $\mathbf{R} L^H\calM'$ is equivalent to $(\calM')^\circ$ because $\calM'$ is simplicial \cite[\S\;4]{DK2}. Since $\calM'$ is combinatorial,
$(\calM')^\circ$ is presentable, and hence, by Theorem~\ref{Thm2-6}, WVP implies that $D\calS$-localization exists on $(\calM')^\circ$, where $D$ is a composite of derived Quillen functors from $\calM$ to $\calM'$ in~\eqref{dugger}. Since all objects of $\calM'$ are cofibrant, $D\calS$\nobreakdash-localization on $(\calM')^\circ$ can be enhanced to a left Bousfield localization of $\calM'$ as a right semi-model structure as in Proposition~\ref{Prop6-1}.
\end{proof}

As observed in \cite{CCh,LoMonaco,RT}, if one uses VP instead of~WVP, then left Bousfield localizations of left proper combinatorial model categories at classes of morphisms exist without need of further assumptions.

\vskip 0.3cm

\noindent
Departament de Matem\`atiques i Inform\`atica, Universitat de Barcelona (UB), Gran Via de les Corts Catalanes 585, 08007 Barcelona

\vskip 0.2cm

\noindent
carles.casacuberta@ub.edu

\noindent
javier.gutierrez@ub.edu


\begin{thebibliography}{99}

\bibitem{AR1} J. Ad\'{a}mek and J. Rosick\'{y}, On injectivity in locally presentable categories, {\it Trans. Amer. Math. Soc.} {\bf 336} (1993), no.\;2, 785--804.
       
\bibitem{AR} J. Ad\'{a}mek and J. Rosick\'{y}, \textit{Locally Presentable and Accessible Categories}, London Math. Soc. Lecture Note Ser., vol.\;189, Cambridge University Press, Cambridge, 1994.

\bibitem{Bagaria} J. Bagaria,
$C^{(n)}$-cardinals, \textit{Arch. Math. Logic} \textbf{51} (2012), 213--240.

\bibitem{BCMR} J. Bagaria, C. Casacuberta, A.R.D. Mathias and J. Rosick\'{y}, Definable orthogonality classes in accessible categories are small, {\it J. Eur. Math. Soc.} {\bf 17} (2015), no.\;3, 549--589.

\bibitem{BW} J. Bagaria and T. M. Wilson, The weak Vop\v{e}nka principle for definable classes of structures, {\it J. Symb. Log.} {\bf 88} (2023), no.\;1, 145--168.

\bibitem{BWh} M. A. Batanin and D. White, Left Bousfield localization without left properness, {\it J. Pure Appl. Algebra} {\bf 228} (2024), no.\;6, 107570.

\bibitem{Bergner} J. Bergner, A model category structure on the category of simplicial categories, {\it Trans. Amer. Math. Soc.} {\bf 359} (2007), 2043--2058.

\bibitem{BCh} G. Biedermann and B. Chorny, Duality and small functors, {\it Alg. Geom. Topol.} {\bf 15} (2015), 2609--2657.

\bibitem{Bousfield} A. K. Bousfield, Unstable localization and periodicity, in: Algebraic Topology; New Trends in Localization and Periodicity (Sant Feliu de Gu\'{\i}xols, 1994), Progress in Math., vol.\;136, Birkh\"auser, Basel, 1996, 33--50.

\bibitem{BousfieldJAMS} A. K. Bousfield, Localization and periodicity in unstable homotopy theory, {\it J. Amer. Math. Soc.} {\bf 7} (1994), no.\;4, 831--873.

\bibitem{BousfieldTAMS} A. K. Bousfield, On the telescopic homotopy theory of spaces, {\it Trans. Amer. Math. Soc.} {\bf 353} (2001), no.\;6, 2391--2426.

\bibitem{BF} A. K. Bousfield and E. M. Friedlander, Homotopy theory of {$\Gamma $}-spaces, spectra, and bisimplicial sets, {\it Geometric Applications of Homotopy Theory ({E}vanston, 1977), {II}}, 80--130, {\it Lecture Notes in Math.} vol.\;658. Springer, Berlin, 1978.

\bibitem{Carmona} V. Carmona, When Bousfield localizations and homotopy idempotent functors meet again, {\it Homol. Homotopy Appl.} {\bf 25} (2023), no.\;2, 187--218.

\bibitem{PAMS} C. Casacuberta, On the rationalization of the circle, {\it Proc. Amer. Math. Soc.} {\bf 118} (1993), no.\;3, 995--1000.

\bibitem{CCh} C. Casacuberta and B. Chorny, 
The orthogonal subcategory problem in homotopy theory, in:
An Alpine Anthology of Homotopy Theory (Arolla, 2004), Contemp. Math. vol.\;399, Amer. Math. Soc., Providence (2006), 41--53.

\bibitem{CG} C. Casacuberta and J. J. Guti\'{e}rrez, Homotopical localizations of module spectra, {\it Trans. Amer. Math. Soc.} {\bf 357} (2005), no.\;7, 2753--2770.

\bibitem{CGR} C. Casacuberta, J. J. Guti\'{e}rrez, and J. Rosick\'{y}, Are all localizing subcategories of stable homotopy categories coreflective?, {\it Adv. Math.} {\bf 252}
(2014), 158--184.

\bibitem{CSS} C. Casacuberta, D. Scevenels, and J. H. Smith, Implications of large-cardinal principles in homotopical   localization, {\it Adv. Math.} {\bf 197} (2005), no.\;1, 120--139.

\bibitem{Farjoun}  E. Dror Farjoun, {\it Cellular Spaces, Null Spaces and Homotopy Theory}, Lecture Notes in Math., vol.\;1622, Springer-Verlag, Berlin, Heidelberg, 1996.

\bibitem{Dugger1} D. Dugger, 
Combinatorial model categories have presentations, {\it Adv. Math.} {\bf 164} (2001), 177--201.

\bibitem{Dugger2} D. Dugger, Replacing model categories with simplicial ones, {\it Trans. Amer. Math. Soc.} {\bf 353} (2001), no.\;12, 5003--5027.

\bibitem{DK1}
W. G. Dwyer and D. M. Kan, Calculating simplicial localizations, {\it J. Pure Appl. Algebra} {\bf 18} (1980), 17--35.

\bibitem{DK2}
W. G. Dwyer and D. M. Kan, Function complexes in homotopical algebra, {\it Topology} {\bf 19} (1980), 427--440.

\bibitem{FH} P. Freyd and A. Heller, Splitting homotopy idempotents II, {\it J. Pure Appl. Algebra} {\bf 89} (1993), no.\;1-2, 93--106.

\bibitem{FK} P. J. Freyd and G. M. Kelly, Categories of continuous functors~I, \emph{J. Pure Appl. Algebra} \textbf{2} (1972), 169--191.

\bibitem{GU} P. Gabriel and F. Ulmer, \textit{Local pr\"asentierbare Kategorien},
Lecture Notes in Math., vol.\;221, Springer, Berlin, Heidelberg, 1971.

\bibitem{Hinich} V. Hinich, Dwyer--Kan localization revisited, \textit{Homol. Homotopy Appl.\/} \textbf{18} (2016), no.\;1, 27--48.

\bibitem{Hirschhorn} P. S. Hirschhorn, \textit{Model Categories and their Localisations}, Math. Surveys and Monographs, vol.\;99, Amer. Math. Soc., Providence, 2003.

\bibitem{HPS} M. Hovey, J. H. Palmieri, N. P. Strickland, \textit{Axiomatic Stable Homotopy Theory}, 
Mem. Amer. Math. Soc., vol.\;128, no.\;610, Amer. Math. Soc., Providence, 1997.

\bibitem{HSS} M. Hovey, B. Shipley, and J. H. Smith, Symmetric spectra, {\it J. Amer. Math. Soc.} {\bf 13} (2000), 149--208.

\bibitem{Jech} T. Jech, \emph{Set Theory. The Third Millenium Edition, Revised and Expanded}, Springer Monographs in Math., Springer-Verlag, Berlin, Heidelberg, 2003.

\bibitem{Joyal} A. Joyal, Quasi-categories and Kan complexes, \emph{J. Pure Appl. Algebra} \textbf{175} (2002), 207--222.

\bibitem{Levy} A. L\'evy, \emph{A Hierarchy of Formulas in Set Theory}, Mem. Amer. Math. Soc., vol.\;57, Amer. Math. Soc., Providence, 1965.

\bibitem{LoMonaco} G. Lo Monaco, Vop\v{e}nka's principle in $\infty$-categories, \emph{J. Pure Appl. Algebra} \textbf{228} (2024), 107633, 30 pp.

\bibitem{Lurie} J. Lurie, \emph{Higher Topos Theory}, Annals of Mathematics Studies, vol.\;170, Princeton University
Press, 2009.

\bibitem{Mazel-gee} A. Mazel-Gee, Quillen adjunctions induce adjunctions of quasicategories, {\it New York J. Math.} \textbf{22} (2016), 57--93.

\bibitem{Neeman} A. Neeman, \textit{Triangulated Categories},
Annals of Mathematics Studies, vol.\;148, Princeton University Press, Princeton, 2001.

\bibitem{Raptis}
H. K. Nguyen, G. Raptis, and C. Schrade, Adjoint functor theorems for $\infty$-categories, \textit{J. Lond. Math. Soc.} \textbf{101} (2020), no.\;2, 659--681.

\bibitem{Prz1} A. Prze\'{z}dziecki, An ``almost'' full embedding of the category of graphs into the category of groups, {\it Adv. Math.} {\bf 225} (2010), no.\;4, 1893--1913.

\bibitem{Prz2} A. Prze\'{z}dziecki, An almost full embedding of the category of graphs into the category of abelian groups,
{\it Adv. Math.} {\bf 257} (2014), 527--545.
		
\bibitem{Quillen} D. G. Quillen, {\it Homotopical Algebra},
Lecture Notes in Math. vol.\;43, Springer-Verlag, Berlin, New York, 1967.

\bibitem{RSS} C. Rezk, S. Schwede, and B. Shipley, Simplicial structures on model categories and functors, {\it Amer. J. Math.} {\bf 123} (2001), 551--575.

\bibitem{RT} J. Rosick\'{y} and W. Tholen, Left-determined model categories and universal homotopy theories, {\it Trans. Amer. Math. Soc.} {\bf 355} (2003), no.\;9, 3611--3623.

\bibitem{Tai} J.-Y. Tai,
On $f$-localization functors and connectivity,
in: Stable and Unstable Homotopy, Fields Inst. Commun., vol.\;19, Amer. Math. Soc., Providence, 1998, 285--298.

\bibitem{Voevodsky} V. Voevodsky, Simplicial radditive functors, {\it J. K-Theory} {\bf 5} (2010), 201--244.

\bibitem{Wilson1} T. M. Wilson, Weak {V}op\v{e}nka's principle does not imply {V}op\v{e}nka's principle, {\it Adv. Math.} {\bf 363} (2020), 106986, 11 pp.

\bibitem{Wilson2} T. M. Wilson, The large cardinal strength of weak Vop\v{e}nka's principle, \emph{J. Math. Logic} {\bf 22} (2022), 2150024, 15 pp.

\end{thebibliography}
\end{document}